\newcommand*{\keyterm}[1]{\emph{#1}}
\DeclareMathOperator{\re}{Re}			
\DeclareMathOperator{\Span}{span}		
\newcommand{\ran}{\mathcal{R}}						
\DeclareMathOperator*{\dist}{dist}       
\DeclareMathOperator*{\diag}{diag}
\newcommand{\ga}{\alpha}
\newcommand{\gb}{\beta}
\renewcommand{\gg}{\gamma}
\newcommand{\gd}{\delta}
\newcommand{\gl}{\lambda}
\newcommand{\gw}{\omega}
\newcommand{\gs}{\sigma}
\newcommand{\eps}{\varepsilon}
\newcommand*{\C}{{\mathbb{C}}}     
\newcommand*{\R}{{\mathbb{R}}}     
\newcommand*{\Z}{{\mathbb{Z}}}     
\newcommand*{\N}{{\mathbb{N}}}     
\newcommand*{\Lin}{{\mathcal{L}}}   
\newcommand*{\Dom}{{\mathcal{D}}}   
\renewcommand{\ran}{\mathcal{R}} 
\newcommand*{\abs} [1]{\lvert#1\rvert}
\newcommand*{\norm}[1]{\lVert#1\rVert}
\newcommand*{\set} [1]{\{#1\}}
\newcommand*{\iprod}[2]{\langle#1,#2\rangle}    
\newcommand*{\ldelim}[2]{\csname#1l\endcsname#2}   
\newcommand*{\rdelim}[2]{\csname#1r\endcsname#2}   
\newcommand*{\mdelim}[2]{\csname#1m\endcsname#2}   
\newcommand*{\Norm}[1]{\left\lVert#1\right\rVert}
\newcommand*{\Setm}[3][big]{\ldelim{#1}{\{}\,#2\mdelim{#1}{|}#3\,\rdelim{#1}{\}}}
\newcommand*{\Set}[1]{\left\{#1\right\}}
\newcommand*{\Lp}[1][p]{L^{#1}}
\newcommand*{\lp}[1][p]{\ell^{#1}}
\newcommand*{\half}[1][2]{\frac{1}{#1}}
\newcommand*{\inv}{^{-1}}
\newcommand*{\ddb}[2][1]{\ifthenelse{\equal{#1}{1}}{\frac{d}{d#2}}{\frac{d^{#1}}{d#2^{#1}}}}
\newcommand*{\pdb}[2][1]{\ifthenelse{\equal{#1}{1}}{\frac{\partial}{\partial{#2}}}{\frac{\partial^{#1}}{\partial#2^{#1}}}}
\newcommand{\pmat}[1]{\begin{pmatrix}#1\end{pmatrix}}
\newcommand{\eq}[1]{\begin{align*}#1\end{align*}}
\newcommand{\eqn}[1]{\begin{align}#1\end{align}}
\newcommand{\Omi}{\mathcal{O}}
\newtheorem{theorem}{Theorem}
\newtheorem{lemma}[theorem]{Lemma}
\newtheorem{corollary}[theorem]{Corollary}
\theoremstyle{definition}
\newtheorem{definition}[theorem]{Definition}
\newtheorem{example}[theorem]{Example}
\newtheorem{remark}[theorem]{Remark}
\renewcommand{\subsectionmark}[1]{} 
\title{Polynomial Stability of Semigroups Generated by Operator Matrices}
\author{Lassi Paunonen\thanks{Tampere University of Technology, PO.Box 553, 33101 Tampere, Finland, \texttt{lassi.paunonen@tut.fi}}}
\date{~}
\begin{document}

\maketitle
\vspace{-8ex}

\thispagestyle{plain}

\begin{abstract}
In this paper we study the stability properties of strongly continuous semigroups generated by block operator matrices. We consider triangular and full operator matrices whose diagonal operator blocks generate polynomially stable semigroups. As our main results, we present conditions under which also the semigroup generated by the operator matrix is polynomially stable. The theoretic results are applied to deriving conditions for the polynomial stability of a system consisting of a two-dimensional and a one-dimensional damped wave equations.  
\end{abstract}

\smallskip

{\small
\noindent\textbf{Keywords:} Strongly continuous semigroup, block operator matrix, polynomial stability.
}

\section{Introduction}
\label{sec:introduction}

The main topic of this paper is the nonuniform stability of strongly continuous semigroups generated by $2\times 2$ block operator matrices. In particular, we are interested in the asymptotic behaviour of semigroups generated by operators of the form
\eqn{
\label{eq:Aopintro}
A=\pmat{A_1&BC\\0&A_2}, \qquad \mbox{and} \qquad A= \pmat{A_1&B_1C_2\\B_2C_1&A_2}
}
where $A_2: \Dom(A_2)\subset X_2\rightarrow X_2$ and $A_2: \Dom(A_2)\subset X_2\rightarrow X_2$ generate strongly continuous semigroups $T_1(t)$ and $T_2(t)$, respectively, and where $X_1$ and $X_2$ are Hilbert spaces. The rest of the operators are assumed to be bounded. In both of the cases in~\eqref{eq:Aopintro} we denote by $T(t)$ the semigroup generated by $A$ on the Hilbert space $X=X_1\times X_2$.
We concentrate on the situation where the semigroups $T_1(t)$ and $T_2(t)$ are both polynomially stable~\cite{BatEng06,BatDuyPolStab,BorTom10}.
As the main results of this paper, we present conditions under which also the semigroup $T(t)$ generated by $A$ is polynomially stable.

If the semigroups $T_1(t)$ and $T_2(t)$ are exponentially stable, the operator $A$ can be seen as a bounded perturbation of an operator
\eq{
A_0=\pmat{A_1&0\\0&A_2},
}
which generates an exponentially stable semigroup. The perturbation theory for exponentially stable semigroups then states that also the semigroup generated by $A$ is exponentially stable provided that the norms of the operators $BC$, or $B_1C_2$ and $B_2C_1$, are sufficiently small~\cite[Thm. III.1.3]{engelnagel}. In fact, the semigroup generated by the block triangular operator $A$ in~\eqref{eq:Aopintro} is exponentially stable regardless of the size of $\norm{BC}$. However, if the stability of $T_1(t)$ and $T_2(t)$ is not exponential, the situation becomes more complicated, as is illustrated by the following example.

\begin{example}
If $A_1: \Dom(A_1)\subset X_1\rightarrow X_1$ generates a semigroup $T_1(t)$ on $X_1$ and if $\eps>0$, then the block operator matrix
  \eq{
  A = \pmat{A_1 &\eps I\\0&A_1}, \qquad \Dom(A) = \Dom(A_1)\times \Dom(A_1)
  }
  generates a semigroup
  \eq{
  T(t) = \pmat{T_1(t)& \eps t T_1(t)\\0&T_1(t)}
  }
  on $X=X_1\times X_1$.
  In order for this semigroup to be uniformly bounded, it is necessary that
  \eq{
  \sup_{t>0} \,\eps t\norm{T_1(t)}<\infty,
  }
  which implies $\norm{T_1(t)}\rightarrow 0$ as $t\rightarrow \infty$. However, this is only possible if the semigroup $T_1(t)$ is exponentially stable~\cite[Prop. V.1.7]{engelnagel}. This concludes that the semigroup $T(t)$ is unstable whenever the semigroup $T_1(t)$ is not exponentially stable.
\end{example}

In this paper we show that if $T_1(t)$ and $T_2(t)$ are not exponentially stable, then the stability of the semigroup $T(t)$ also depends on other properties of $BC$, $B_1C_2$, and $B_2C_1$ besides their norms. In fact, if $T_1(t)$ and $T_2(t)$ are polynomially stable, 
it is necessary to pose smoothness conditions on these operators in order to guarantee
the stability of $T(t)$.
In particular,
we assume the bounded operators in $A$ satisfy range conditions of the form
\eq{
\ran(B)\subset \Dom( (-A_1)^\gb) \qquad \mbox{and} \qquad \ran(C^\ast) \subset \Dom( (-A_2^\ast)^\gg)
}
for some $\gb,\gg\geq 0$, or
  \eq{
  \ran(B_1)\subset \Dom( (-A_1)^{\gb_1}), \qquad \ran(C_1^\ast ) \subset \Dom( (-A_1^\ast)^{\gg_1})\\
  \ran(B_2)\subset \Dom( (-A_2)^{\gb_2}), \qquad \ran(C_2^\ast ) \subset \Dom( (-A_2^\ast)^{\gg_2})
  }
  for some $\gb_k,\gg_k\geq 0$ and $k=1,2$.
  We will show that the semigroup generated by a triangular $A$ is polynomially stable provided that the exponents $\gb$ and $\gg$ are sufficiently large. In the case of the semigroup generated by a full operator matrix $A$, it is in addition required that the graph norms
  \eq{
  \norm{(-A_1)^{\gb_1}B_1}, \quad
  \norm{(-A_1^\ast)^{\gg_1}C_1^\ast}, \quad 
  \norm{(-A_2)^{\gb_2}B_2}, \quad \mbox{and} \quad 
  \norm{(-A_2^\ast)^{\gg_2}C_2^\ast} 
  }
  are small enough. 

  In addition to our main focus, which is the case where both $T_1(t)$ and $T_2(t)$ are polynomially stable, we separately consider the situations where one of $T_1(t)$ and $T_2(t)$ is exponentially stable and the other is polynomially stable. We show that in such a situation it is possible to completely omit the conditions on the operator $BC$, and relax those on operators $B_2C_1$ and $B_1C_2$ in the stability results. 
In fact, we will see that these conditions agree with the interpretation of exponential stability as the ``limit case'' of polynomial stability with the exponent $\ga=0$.

To the author's knowledge, the polynomial stability of semigroups generated by block operator matrices has not been studied previously in the literature. One known result regarding nonuniform stability of triangular systems states that if one of $T_1(t)$ and $T_2(t)$ is exponentially stable and the other is strongly stable, the semigroup generated a triangular $A$ is also strongly stable, see, for example,~\cite[Lem. 20]{hamalainenpohjolainen10}. The result only applies to triangular systems, and in the corresponding situation for a full operator matrix the stability can in general be destroyed even by operators $B_1C_2$ and $B_2C_1$ with arbitrarily small norms. 
Example~\ref{ex:exppol} in Section~\ref{sec:optex} demonstrates this situation.

The results presented in this paper can be used in studying the asymptotic behaviour of linear partial differential equations. In addition, they also
have applications in the control of infinite-dimensional linear systems. The procedure for stabilizing a linear system using an observer-based dynamic feedback controller requires studying the stability of semigroups generated by block operator matrices, see for example~\cite{CurWei97,hamalainenpohjolainen10,PauPoh12a,PauPoh13b}, and~\cite[Sec. 5.3]{curtainzwart}. If the controlled system is only strongly or polynomially stabilizable, determining the stability of the closed-loop requires results on 
operators of the form~\eqref{eq:Aopintro} where both of $T_1(t)$ and $T_2(t)$ are strongly or polynomially stable.
In particular, since the systems under consideration usually have finite numbers of inputs and outputs, the interconnections corresponding to the operator blocks $BC$, $B_1C_2$, and $B_2C_2$ in~\eqref{eq:Aopintro} are very often finite rank operators.

The operators in~\eqref{eq:Aopintro} can be seen as perturbations of the block-diagonal operator $A_0 = \diag(A_1,A_2)$.
Therefore, the perturbation results in~\cite{Pau12,Pau13a,Pau13c-a} could be used to derive conditions for the stability of the semigroup generated by $A$. 
During the course of this paper we will see that
taking into account the structure of the operator matrices
yields considerably better results.
In particular, the general perturbation results in the above references require that the exponents $\gb$ and $\gg$ are sufficiently large,
and the corresponding graph norms of the perturbing operators are small enough. The results in this paper show that in the case of the triangular block operator matrix, the conditions on the graph norms can be omitted completely. Moreover, for both triangular and full operator matrices the conditions on the exponents $\gb,\gg\geq 0$, and $\gb_1,\gg_1,\gb_2,\gg_2\geq 0$ are weaker than the conditions
achievable by a
direct application of the perturbation results in~\cite{Pau12,Pau13a,Pau13c-a}.

We illustrate the applicability of the theoretic results by studying a system consisting of two damped wave equations --- one two-dimensional and the other one-dimensional.
Both of the wave equations are polynomially stable, and they are coupled in one direction. 
We use our results on triangular systems to derive conditions under which the full connected system is polynomially stable.
In addition, in Section~\ref{sec:optex} we present two shorter examples demonstrating that the conditions on the exponents $\gb,\gg\geq 0$, and $\gb_1,\gg_1,\gb_2,\gg_2\geq 0$ in our main results are, in certain sense, optimal.

The paper is organized as follows. In Section~\ref{sec:mathprelim} we introduce notation and collect some essential results on polynomially stable semigroups. The main results of the paper are presented in Section~\ref{sec:compstab}. The results concerning the stability of semigroups generated by triangular and full systems are proved in Sections~\ref{sec:trisys} and~\ref{sec:fullsys}, respectively. Section~\ref{sec:optex} contains two examples illustrating the optimality of our results. In Section~\ref{sec:waveex} apply the theoretic results to determining the stability of two connected wave equations. Section~\ref{sec:conclusions} contains concluding remarks.

\section{Background on Polynomially Stable Semigroups}
\label{sec:mathprelim}

In this section we introduce the notation used throughout the paper, and review the definition and some of the most important properties of polynomially stable semigroups.
If~$X$ and~$Y$ are Banach spaces and~$A:X\rightarrow Y$ is a linear operator, then we denote by~$\Dom(A)$ and $\ran(A)$ the domain and the range of~$A$, respectively. 
The space of bounded linear operators from~$X$ to~$Y$ is denoted by~$\Lin(X,Y)$. 
If \mbox{$A:\Dom(A)\subset X\rightarrow X$,} then~$\gs(A)$ and~$\rho(A)$ denote the spectrum and the \mbox{resolvent} set of~$A$, respectively. For~$\gl\in\rho(A)$ the resolvent operator is given by \mbox{$R(\gl,A)=(\gl -A)^{-1}$}. 
The inner product on a Hilbert space and the dual pairing on a Banach space are both denoted by $\iprod{\cdot}{\cdot}$.

For a function $f:\R \rightarrow \R$ and for $\ga\geq 0$ we use the notation 
\eq{
f(\gw)=\Omi \left( \abs{\gw}^\ga \right)
}
if there exist constants $M>0$ and $\gw_0\geq 0$ such that
$\abs{f(\gw)}\leq M \abs{\gw}^\ga$
for all $\gw\in \R$ with $\abs{\gw}\geq \gw_0$. 

\begin{definition}
  \label{def:polstab}
  Let $\ga>0$. A semigroup $T(t)$ on a Banach space $X$ generated by $A : \Dom(A) \subset X\rightarrow X$ is \keyterm{polynomially stable with $\ga$}, if $T(t)$ is uniformly bounded, $i\R\subset \rho(A)$, and if there exists $M\geq 1$ such that
  \eq{
  \norm{T(t)A\inv } \leq \frac{M}{t^{1/\ga}}, \qquad \forall t>0.
  }
\end{definition}

For a polynomially stable semigroup $T(t)$ generated by $A$, the operators 
operators $-A$ and $-A^\ast$ are sectorial in the sense of~\cite[Ch. 2]{haasefuncalc} due to the fact that $T(t)$ is uniformly bounded. Therefore, the fractional powers $(-A)^\gb$ and $(-A^\ast)^\gb$ are well-defined for all $\gb\geq 0$.

The following characterizations for polynomial stability of a semigroup on a Hilbert space are essential to the theory presented in this paper. For the proofs of the equivalences, see~\cite[Lem. 2.4]{BatChi13},~\cite[Lem. 2.3, Thm. 2.4]{BorTom10}, and~\cite[Lem. 3.2]{LatShv01}.

\begin{lemma} 
  \label{lem:BorTomresolvent}
  Assume $A$ generates a uniformly bounded semigroup on a Hilbert space $X$, and $i\R\subset\rho(A)$. For fixed $\ga,\gb>0$ the following are equivalent.
  \eq{
  &\mbox{\textup{(a)}} \quad \norm{T_A(t)A\inv} \leq \frac{M}{t^{1/\ga}}, \qquad \forall t>0\nonumber \\[1.5ex]
&\mbox{\textup{(a$'$)}} \quad \norm{T_A(t)(-A)^{-\gb}} \leq \frac{M}{t^{\gb/\ga}}, \qquad \forall t>0\nonumber \\[1.5ex]
  &\mbox{\textup{(b)}} \quad\norm{R(i\gw,A)} = \Omi(\abs{\gw}^\ga) \nonumber\\[2ex]
  &\mbox{\textup{(c)}} \quad\sup_{\re\gl\geq 0} \, \norm{R(\gl,A)(-A)^{-\ga}}<\infty.
  }
\end{lemma}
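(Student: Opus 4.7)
The plan is to establish the chain (a$'$) $\Leftrightarrow$ (a) $\Leftrightarrow$ (b) $\Leftrightarrow$ (c), with each link corresponding to one of the three references cited after the lemma.

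The time-domain equivalence (a) $\Leftrightarrow$ (a$'$) is interpolation of decay rates for fractional powers of the sectorial operator $-A$. Since $(-A)\inv$ is bounded and commutes with $T(t)$, iterating (a) gives
\eq{
\norm{T(t)(-A)^{-n}} = \norm{\bigl(T(t/n)(-A)\inv\bigr)^n} \leq \frac{C_n}{t^{n/\ga}}
}
for every $n\in\N$, and the moment inequality for sectorial operators yields
\eq{
\norm{T(t)(-A)^{-\gb} x} \leq C \norm{T(t) x}^{1-\gb/n} \norm{T(t)(-A)^{-n} x}^{\gb/n}
}
for any integer $n\geq \gb$. Combined with the uniform boundedness of $T(t)$ and the identity $A\inv = -(-A)\inv$, this produces (a$'$); the reverse implication (a$'$) $\Rightarrow$ (a) is the special case $\gb = 1$.

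For the central equivalence (a) $\Leftrightarrow$ (b), I would invoke the Borichev--Tomilov theorem. Its proof exploits the Hilbert space structure in an essential way: one represents the orbit $t\mapsto T(t)(-A)\inv x$ as an inverse Fourier--Laplace transform of $\gl\mapsto R(\gl,A)(-A)\inv x$ along a vertical contour, shifts the contour to $i\R$ using the Hille--Yosida estimate $\norm{R(\gl,A)}\leq M/\re \gl$, and applies Plancherel together with a dyadic frequency decomposition in order to trade polynomial resolvent growth on $i\R$ for polynomial orbit decay. The converse (a) $\Rightarrow$ (b) is a direct resolvent computation based on the Laplace representation together with (a$'$).

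For (b) $\Leftrightarrow$ (c), I would analyze the holomorphic function $\gl\mapsto R(\gl,A)(-A)^{-\ga}$ on the closed right half plane. Interior boundedness follows from the Hille--Yosida estimate combined with the boundedness of $(-A)^{-\ga}$; on $i\R$ the bound follows from (b) via the spectral--calculus identity that relates $R(i\gw,A)(-A)^{-\ga}$ to $\abs{\gw}^{-\ga} R(i\gw,A)$ modulo a bounded correction. A Phragm\'en--Lindel\"of argument then glues the two into the uniform bound of (c), and the converse follows by inverting the same identity. The step I expect to be hardest is the Borichev--Tomilov implication (b) $\Rightarrow$ (a): on a general Banach space this direction loses a logarithmic factor, and recovering the sharp polynomial rate requires the Plancherel isometry in a genuine way. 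One must estimate a square function of the orbit in frequency rather than the orbit pointwise, and track the frequency localization carefully so that the resolvent bound $\norm{R(i\gw,A)} = \Omi(\abs{\gw}^\ga)$ is integrated against the correct weight.
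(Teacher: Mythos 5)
The paper does not prove this lemma at all: it is imported verbatim from the literature, with the three equivalences attributed to [Lem.\ 2.4]{BatChi13}, [Lem.\ 2.3, Thm.\ 2.4]{BorTom10}, and [Lem.\ 3.2]{LatShv01}. Your outline correctly reconstructs the architecture of exactly those cited proofs --- moment-inequality interpolation for (a)$\Leftrightarrow$(a$'$), the Borichev--Tomilov Plancherel argument for the hard implication (b)$\Rightarrow$(a), and the maximum-principle/Phragm\'en--Lindel\"of passage between the boundary bound (b) and the half-plane bound (c) as in Latushkin--Shvydkoy --- so as a roadmap it is sound and honest about where the real work lies. Two points deserve tightening. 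First, (a$'$)$\Rightarrow$(a) is \emph{not} literally the special case $\gb=1$, because $\gb$ is fixed in the statement and may be less than $1$; you need the same interpolation machinery again (iterate (a$'$) to reach an integer power $n\gb\geq 1$, then apply the moment inequality to come back down to exponent $1$). Second, the boundary estimate in (b)$\Rightarrow$(c) is not a single ``spectral-calculus identity'' but the iterated resolvent identity $R(i\gw,A)(-A)^{-1}=(i\gw)^{-1}\bigl((-A)^{-1}-R(i\gw,A)\bigr)$, which gives $\norm{R(i\gw,A)(-A)^{-n}}=\Omi(\abs{\gw}^{\ga-n})$, combined with the moment inequality to reach the fractional exponent $\ga$; stating it this way closes what would otherwise be a gap, since $(-A)^{-\ga}$ does not commute with $i\gw$ in any naive sense. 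With those repairs your plan coincides with the proofs the paper points to, and nothing in it would fail.
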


\begin{lemma} 
  \label{lem:CRBbdd}
Assume $Y$ is a Banach space.
Let $T(t)$ generated by $A$ on a Hilbert space $X$ be polynomially stable with $\ga>0$, and let $B\in \Lin(Y,X)$ and $C\in \Lin(X,Y)$ be such that $\ran(B)\subset \Dom( (-A)^{\gb})$ and $\ran(C^\ast)\subset \Dom( (-A^\ast)^\gg)$
for some $\gb,\gg\geq 0$ satisfying $\gb+\gg\geq \ga$. Then
there exists $M\geq 1$ such that
\eq{
\norm{CR(\gl,A)B}\leq M \norm{(-A)^\gb B} \norm{(-A^\ast)^\gg C^\ast}
}
for all $\gl\in \overline{\C^+}$.
\end{lemma}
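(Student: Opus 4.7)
The plan is to factorize $CR(\gl,A)B$ through the operator $R(\gl,A)(-A)^{-\ga}$, whose norm is uniformly bounded over $\overline{\C^+}$ by characterization (c) in Lemma~\ref{lem:BorTomresolvent}. The first preparatory step will be to note that the range conditions together with the closedness of $(-A)^\gb$ and $(-A^\ast)^\gg$ yield, by the closed graph theorem, that $(-A)^\gb B \in \Lin(Y,X)$ and $(-A^\ast)^\gg C^\ast \in \Lin(Y^\ast,X)$; in particular the two graph norms on the right-hand side of the claim are finite. Since $0 \in \rho(A)$, the bounded inverse $(-A)^{-\gb}$ satisfies $(-A)^{-\gb}(-A)^\gb = I$ on $\Dom((-A)^\gb)$, which gives the operator identity $Bu = (-A)^{-\gb}\cdot(-A)^\gb Bu$ for every $u\in Y$; analogously $C^\ast v = (-A^\ast)^{-\gg}\cdot(-A^\ast)^\gg C^\ast v$ for every $v\in Y^\ast$.

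Armed with these factorizations, the main computation is to pair with $u\in Y$ and $v\in Y^\ast$ and move $(-A^\ast)^{-\gg}$ across the inner product using $((-A^\ast)^{-\gg})^\ast = (-A)^{-\gg}$:
\eq{
\iprod{CR(\gl,A)Bu}{v}
&= \iprod{R(\gl,A)(-A)^{-\gb}[(-A)^\gb Bu]}{(-A^\ast)^{-\gg}[(-A^\ast)^\gg C^\ast v]}\\
&= \iprod{(-A)^{-\gg}(-A)^{-\gb} R(\gl,A)[(-A)^\gb Bu]}{(-A^\ast)^\gg C^\ast v}.
}
Here the bounded operators $(-A)^{-\gb}$, $(-A)^{-\gg}$ and $R(\gl,A)$ pairwise commute as they all belong to the functional calculus of the sectorial operator $-A$, and the fractional-power law gives $(-A)^{-\gg}(-A)^{-\gb} = (-A)^{-(\gb+\gg)}$. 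Using the hypothesis $\gb+\gg \geq \ga$ I would write
\eq{
R(\gl,A)(-A)^{-(\gb+\gg)} = \bigl[R(\gl,A)(-A)^{-\ga}\bigr](-A)^{-(\gb+\gg-\ga)},
}
where $(-A)^{-(\gb+\gg-\ga)}$ is a bounded operator on $X$. The Cauchy--Schwarz/duality bound combined with Lemma~\ref{lem:BorTomresolvent}(c) then supplies the claim with the constant $M=\norm{(-A)^{-(\gb+\gg-\ga)}}\sup_{\re\gl\geq 0}\norm{R(\gl,A)(-A)^{-\ga}}$.

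The main obstacle in executing this plan is not a conceptual one but rather the bookkeeping of domains when manipulating the unbounded fractional powers: one needs to verify that the bounded fractional powers of $-A$ commute with $R(\gl,A)$ and with one another, and that an identity such as $Bu = (-A)^{-\gb}(-A)^\gb Bu$ is a genuine pointwise equality on all of $Y$ rather than just a formal one. Both facts follow from the standard functional calculus for sectorial operators, and once in hand they reduce the lemma to the short computation above.
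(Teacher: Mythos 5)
Your proof is correct and follows essentially the same route as the paper: both arguments hinge on inserting $(-A)^{-(\gb+\gg)}$, peeling off $(-A)^{-\ga}$ to pair with the resolvent via Lemma~\ref{lem:BorTomresolvent}(c), and arrive at the same constant $M=\norm{(-A)^{\ga-\gb-\gg}}\sup_{\gl\in\overline{\C^+}}\norm{R(\gl,A)(-A)^{-\ga}}$. The only cosmetic difference is that you carry the $\gg$-factor through the duality pairing $\iprod{\cdot}{v}$ at the end, whereas the paper first packages the same duality computation into a bounded extension $C_\gg$ of $C(-A)^\gg$ and then writes a pure operator identity.
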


\begin{proof}
Since $(-A)^\gb$ has a bounded inverse, $(-A)^\gb$ and $(-A)^\gb B$ are closed operators. Since $\Dom( (-A)^\gb B)=Y$, the Closed Graph Theorem concludes $(-A)^\gb B\in \Lin(Y,X)$. Similarly, we have $(-A^\ast)^\gg C^\ast \in \Lin(Y,X)$.

  Since $(-A^\ast)^\gg C^\ast \in \Lin(Y,X)$, for all $x\in \Dom( (-A)^\gg)$ we have
  \eq{
  \norm{C(-A)^\gg x} 
  = \sup_{\norm{y}=1} \abs{\iprod{C(-A)^\gg x}{y}}
  = \sup_{\norm{y}=1} \abs{\iprod{x}{(-A^\ast)^\gg C^\ast y}}
  \leq  \norm{x}\norm{(-A^\ast)^\gg C^\ast },
  }
  and thus $C(-A)^\gg$ extends to a bounded operator $C_\gg\in \Lin(X,Y)$ with norm $\norm{C_\gg}\leq \norm{(-A^\ast)^\gg C^\ast}$. If we choose 
  \eq{
  M = \norm{(-A)^{\ga-\gb-\gg}} \cdot \sup_{\gl\in \overline{\C^+}}\norm{R(\gl,A)(-A)^{-\ga}},
  }
  then for all $\gl\in \overline{\C^+}$ we have
  \eq{
  \MoveEqLeft\norm{CR(\gl,A)B}
  = \norm{C(-A)^\gg R(\gl,A)(-A)^{-\ga} (-A)^{\ga-\gb-\gg} (-A)^\gb B} \\
  &\leq \norm{C_\gg} \norm{R(\gl,A)(-A)^{-\ga}} \norm{ (-A)^{\ga-\gb-\gg}} \norm{ (-A)^\gb B} 
  \leq M \norm{(-A)^\gb B} \norm{(-A^\ast)^\gg C^\ast}.
  } 
\end{proof}

\begin{remark}
  \label{rem:CBRbddmodremark}
  The proof of Lemma~\ref{lem:CRBbdd} shows that the assumption $\ran(C^\ast)\subset \Dom( (-A^\ast)^\gg)$ could be replaced with the condition that $C(-A)^\gg : \Dom( (-A)^\gg)\subset X\rightarrow Y$ has a bounded extension $C_\gg \in \Lin(X,Y)$. In this version of the result the estimate on the operator $CR(\gl,A)B$ would become
  \eq{
  \norm{CR(\gl,A)B}\leq M \norm{(-A)^\gb B} \norm{C_\gg}.
  }
\end{remark}

\begin{lemma}
  \label{lem:unifbddconds}
  Let $A$ generate a semigroup $T(t)$ on a Hilbert space $X$ and let $\gs(A)\subset \overline{\C^-}$. The semigroup $T(t)$ is uniformly bounded if and only if 
  for all $x,y\in X$ 
  we have
      \eq{
      \sup_{\xi>0}\, \xi \int_{-\infty}^\infty \left( \norm{R(\xi+i\eta,A)x}^2+ \norm{R(\xi+i\eta,A)^\ast y}^2 \right) d\eta <\infty.
      } 
      Moreover, if $\tilde{B}\in \Lin(Y,X)$ where $\dim Y<\infty$, then
  \eq{
  &\sup_{\xi>0} \; \xi\int_{-\infty}^\infty \norm{R(\xi+i\eta,A)\tilde{B}}^2 d\eta <\infty,
  \qquad
  &\sup_{\xi>0} \; \xi\int_{-\infty}^\infty \norm{ R(\xi+i\eta,A)^\ast \tilde{B}}^2 d\eta<\infty.
  } 
\end{lemma}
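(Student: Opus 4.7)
The plan is to split the lemma into its equivalence claim and the finite-rank consequence, with essentially all the work contained in the equivalence.

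For the forward implication, assume $\norm{T(t)}\leq M$ for all $t\geq 0$. For fixed $\xi>0$ and $x\in X$, the $X$-valued function $t\mapsto e^{-\xi t}T(t)x$ (extended by $0$ to $t<0$) lies in $L^2(\R;X)$, and its Fourier transform at $\eta$ equals $R(\xi+i\eta,A)x$. The vector-valued Plancherel theorem then gives
\begin{equation*}
\int_{-\infty}^\infty \norm{R(\xi+i\eta,A)x}^2\,d\eta = 2\pi\int_0^\infty e^{-2\xi t}\norm{T(t)x}^2\,dt \leq \frac{\pi M^2}{\xi}\norm{x}^2,
\end{equation*}
so multiplying by $\xi$ yields a bound uniform in $\xi>0$. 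Since the adjoint semigroup $T(t)^\ast$ is uniformly bounded with the same constant, applying the same computation to $T(t)^\ast$ produces the corresponding estimate for $R(\xi+i\eta,A)^\ast y$.

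For the converse, I would appeal to the characterization of uniform boundedness due to Gomilko and Shi--Feng, which asserts precisely that on a Hilbert space finiteness of the displayed supremum for every $x,y\in X$ forces $T(t)$ to be uniformly bounded. The classical argument recovers $\iprod{T(t)x}{y}$ from an inverse Laplace representation along the vertical line $\re\lambda=1/t$, uses integration by parts to convert the exponential factor $e^{\lambda t}$ into terms of the form $R(\lambda,A)^2$ acting on $x$ and $R(\lambda,A)^\ast$ acting on $y$, and then applies Cauchy--Schwarz against the two integral bounds to obtain $\abs{\iprod{T(t)x}{y}}\leq K\norm{x}\norm{y}$ with $K$ independent of $t$. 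Since this is a well-established result, I would cite it rather than reproduce the derivation; this is the only nontrivial obstacle in the proof.

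For the finite-rank assertion, fix a basis $\set{e_1,\dots,e_n}$ of $Y$, and let $C>0$ be the constant from the norm equivalence $\norm{y}^2\leq C\sum_j\abs{y_j}^2$ on $Y$. Then for every $\xi+i\eta$ in the open right half-plane, Cauchy--Schwarz on the expansion $y=\sum_j y_j e_j$ gives
\begin{equation*}
\norm{R(\xi+i\eta,A)\tilde{B}}^2 \leq C\sum_{j=1}^n \norm{R(\xi+i\eta,A)\tilde{B}e_j}^2,
\end{equation*}
and the analogous inequality holds with $R(\xi+i\eta,A)^\ast$ in place of $R(\xi+i\eta,A)$. Integrating in $\eta$, multiplying by $\xi$, and invoking the already-established integral bound of the first part with the choices $x=\tilde{B}e_j$ and $y=\tilde{B}e_j$ in $X$ finishes both estimates.
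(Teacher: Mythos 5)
Your proposal is correct and follows essentially the same route as the paper: the equivalence is delegated to Gomilko's theorem (the paper cites it for both directions, you additionally spell out the easy direction via Plancherel), and the finite-rank claim is reduced to the vector-valued bound by expanding over a basis of $Y$ exactly as in the paper. The only cosmetic slip is that the constant you need in the finite-rank step comes from the inequality $\sum_j\abs{y_j}^2\leq C\norm{y}^2$ rather than the direction you quoted, but since all norms on a finite-dimensional space are equivalent this is immaterial.
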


\begin{proof}
  The proof of the first part can be found in~\cite[Thm. 2]{gomilkounifbdd}.
  If $\dim Y=p<\infty$, we can without loss of generality assume that $Y=\C^p$.
  Let $\set{b_j}_{j=1}^p\subset X$ be such that $\tilde{B}u= \sum_{j=1}^p u_jb_j $ for $u\in \C^p$. A straightforward estimate can be used to show that for any $R\in \Lin(X)$ we have
  $\norm{R \tilde{B}}^2 \leq  \sum_{j=1}^p\, \norm{R b_j}^2$.
  Combining this with the first part of the lemma implies
    \eq{
    &\sup_{\xi>0} \; \xi\int_{-\infty}^\infty \norm{R(\xi+i\eta,A)\tilde{B}}^2 d\eta
    \leq \sum_{j=1}^p \; \sup_{\xi>0} \; \xi\int_{-\infty}^\infty \norm{R(\xi+i\eta,A)b_j}^2 d\eta <\infty\\
    &\sup_{\xi>0} \; \xi\int_{-\infty}^\infty \norm{R(\xi+i\eta,A)^\ast \tilde{B}}^2 d\eta
    \leq \sum_{j=1}^p \; \sup_{\xi>0} \; \xi\int_{-\infty}^\infty \norm{R(\xi+i\eta,A)^\ast b_j}^2 d\eta <\infty.
    }
\end{proof}

\section{Stability of Semigroups Generated by Operator Matrices}
\label{sec:compstab}

In this section we present our main results. The proofs of the theorems are given in Sections~\ref{sec:trisys} and~\ref{sec:fullsys}.
  Throughout the paper we
  assume $T_1(t)$ and $T_2(t)$ are strongly continuous semigroups generated by $A_1: \Dom(A_1) \subset X_1 \rightarrow X_1$ and $A_2: \Dom(A_2) \subset X_2 \rightarrow X_2$, respectively. Most of our results concern the case where both $X_1$ and $X_2$ are Hilbert spaces, and we specifically point out the results that are also valid for Banach spaces.
  Unless otherwise stated, we assume $T_1(t)$ is polynomially stable with $\ga_1>0$, and $T_2(t)$ is polynomially stable with $\ga_2>0$.
  
  Our first main interest is in the stability of the semigroup $T(t)$ generated by
  \eqn{
  \label{eq:Aop}
  A = \pmat{A_1 & BC\\0&A_2},
  \qquad \Dom(A)= \Dom(A_1)\times \Dom(A_2)
  }
  where $B\in \Lin(Y,X_1)$ and $C\in \Lin(X_2,Y)$ for some Banach space $Y$. Since the operator $BC$ is bounded, the semigroup $T(t)$ has the form~\cite[Lem. 3.2.2]{curtainzwart}
  \eq{
  T(t) = \pmat{T_1(t)&S(t)\\0&T_2(t)},
  }
  where $S(t)\in \Lin(X_2,X_1)$ is such that
  \eq{
  S(t)x_2 = \int_0^t T_1(t-s)BCT_2(s)x_2 ds \qquad \forall x_2\in X_2.
  }
We assume the operators $B$ and $C$ satisfy
\eqn{
\label{eq:BCranconds}
\ran(B)\subset \Dom( (-A_1)^\gb) \qquad \mbox{and} \qquad \ran(C^\ast) \subset \Dom( (-A_2^\ast)^\gg)
}
for some $\gb,\gg\geq 0$. As seen in the proof of Lemma~\ref{lem:CRBbdd}, these conditions imply $(-A_1)^\gb B\in \Lin(Y,X_1)$ and $(-A_2^\ast)^\gg C^\ast \in \Lin(Y,X_2)$.

The first two results provide sufficient conditions for the stability of the semigroup $T(t)$ on Hilbert and Banach spaces, respectively. 

\begin{theorem}
  \label{thm:polpoltri}
Assume $X_1$ and $X_2$ are Hilbert spaces.
If $\gb/\ga_1 + \gg/\ga_2> 1$, then the semigroup
generated by $A$ in~\eqref{eq:Aop} is polynomially stable with $\ga = \max \set{\ga_1,\ga_2}$. If $\dim Y<\infty$, then it is sufficient that $\gb/\ga_1 + \gg/\ga_2\geq 1$.
\end{theorem}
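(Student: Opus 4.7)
To prove the theorem I will apply Lemma~\ref{lem:BorTomresolvent}, which reduces the problem to showing (i) $T(t)$ is uniformly bounded, (ii) $i\R \subset \rho(A)$, and (iii) $\norm{R(i\gw, A)} = \Omi(\abs{\gw}^{\max\set{\ga_1,\ga_2}})$. Item (ii) is immediate from the triangular block structure: polynomial stability of $T_1$ and $T_2$ gives $i\R \subset \rho(A_1) \cap \rho(A_2)$, and a direct verification shows
\begin{equation*}
R(\gl, A) = \pmat{R(\gl, A_1) & R(\gl, A_1) BC R(\gl, A_2) \\ 0 & R(\gl, A_2)}
\end{equation*}
for every $\gl \in \rho(A_1) \cap \rho(A_2)$.

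For (iii), the diagonal blocks are $\Omi(\abs{\gw}^{\ga_j}) \subset \Omi(\abs{\gw}^{\max\set{\ga_1,\ga_2}})$. The off-diagonal block I would factor as
\begin{equation*}
R(i\gw, A_1) BC R(i\gw, A_2) = [R(i\gw, A_1)(-A_1)^{-\gb}] \, \widetilde{B} C_\gg \, [(-A_2)^{-\gg} R(i\gw, A_2)],
\end{equation*}
with $\widetilde{B} := (-A_1)^\gb B$ and $C_\gg \in \Lin(X_2, Y)$ the bounded extension of $C(-A_2)^\gg$ from Remark~\ref{rem:CBRbddmodremark}. The three-lines theorem applied to $z \mapsto R(i\gw, A_j)(-A_j)^{-z}$ on the strip $0 \le \re z \le \ga_j$, using the bounds $\Omi(\abs{\gw}^{\ga_j})$ at $z=0$ (from Lemma~\ref{lem:BorTomresolvent}(b)) and $\Omi(1)$ at $z=\ga_j$ (from Lemma~\ref{lem:BorTomresolvent}(c)), yields
\begin{equation*}
\norm{R(i\gw, A_j)(-A_j)^{-\gd}} = \Omi(\abs{\gw}^{(\ga_j - \gd)^+}), \qquad \gd \geq 0.
\end{equation*}
Hence the off-diagonal block is $\Omi(\abs{\gw}^{(\ga_1-\gb)^+ + (\ga_2-\gg)^+})$, and a case analysis shows this exponent is at most $\max\set{\ga_1, \ga_2}$ under $\gb/\ga_1 + \gg/\ga_2 \geq 1$. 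Indeed, the only nontrivial case is $\gb \leq \ga_1$ and $\gg \leq \ga_2$; there $\gb\ga_2 + \gg\ga_1 \geq \ga_1\ga_2$ and (assuming WLOG $\ga_1 \le \ga_2$) dividing by $\ga_2$ gives $\gb + \gg \geq \gb + \gg\ga_1/\ga_2 \geq \ga_1$, which rearranges to $(\ga_1-\gb) + (\ga_2-\gg) \leq \ga_2$.

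For (i) under the strict inequality $\gb/\ga_1 + \gg/\ga_2 > 1$, I would bound $\norm{S(t)}$ using the convolution formula for $S(t)$. Lemma~\ref{lem:BorTomresolvent}(a$'$) applied to $T_1$ (with exponent $\gb$) and to the adjoint semigroup $T_2^\ast$ (with exponent $\gg$), combined with the uniform bounds $\norm{T_j(\cdot)} \leq M_j$, yields
\begin{equation*}
\norm{T_1(r) B} \leq K \min(1, r^{-\gb/\ga_1}), \qquad \norm{CT_2(s)} \leq K \min(1, s^{-\gg/\ga_2}).
\end{equation*}
The first lies in $L^p(0,\infty)$ for every $p > \ga_1/\gb$, and the second in $L^q(0,\infty)$ for every $q > \ga_2/\gg$; conjugate exponents $1/p + 1/q = 1$ meeting both constraints exist precisely when $\gb/\ga_1 + \gg/\ga_2 > 1$. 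Young's convolution inequality then gives $\sup_{t>0} \norm{S(t)} < \infty$, which (together with boundedness of $T_1$ and $T_2$) is exactly the uniform boundedness of $T(t)$.

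The main obstacle is the finer statement $\gb/\ga_1 + \gg/\ga_2 = 1$ under $\dim Y < \infty$, where the Young inequality fails at the endpoint since neither convolution factor sits in the critical $L^p$. I would recover this boundary case through Lemma~\ref{lem:unifbddconds}: since $\dim Y < \infty$ makes $B$ and $\widetilde{C} := (-A_2^\ast)^\gg C^\ast$ finite-rank, the lemma provides
$\sup_\xi \xi \int_\R \norm{R(\xi+i\eta, A_1) B}^2 d\eta < \infty$
and an analogous bound for the adjoint of $A_2$. Using the factorization $R(\gl, A_1) BC R(\gl, A_2) = R(\gl, A_1) B \cdot C_\gg R(\gl, A_2)(-A_2)^{-\gg}$ together with a Cauchy--Schwarz pairing that matches the $L^2$-integrable resolvent factor against the smoothing operator $R(\gl, A_2)(-A_2)^{-\gg}$, one checks the integral condition of Lemma~\ref{lem:unifbddconds} for $A$ directly, thereby establishing uniform boundedness of $T(t)$ without invoking Young. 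Combining (i)--(iii) with Lemma~\ref{lem:BorTomresolvent}(a)$\Leftrightarrow$(b) then gives polynomial stability of $T(t)$ with $\ga = \max\set{\ga_1, \ga_2}$.
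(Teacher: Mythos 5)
Your overall architecture matches the paper's: reduce to the resolvent characterization of Lemma~\ref{lem:BorTomresolvent}, read off $i\R\subset\rho(A)$ and the resolvent formula from the triangular structure, interpolate to bound the off-diagonal block, prove uniform boundedness by a convolution estimate when $\gb/\ga_1+\gg/\ga_2>1$, and fall back on the Gomilko criterion (Lemma~\ref{lem:unifbddconds}) in the endpoint case. Your Young-inequality packaging of the convolution bound and your case analysis showing $(\ga_1-\gb)^++(\ga_2-\gg)^+\leq\max\set{\ga_1,\ga_2}$ are both correct and equivalent to what the paper does via a split integral and H\"older. Two points, however, need repair.

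First, a minor one: the three-lines theorem applied to $z\mapsto R(i\gw,A_j)(-A_j)^{-z}$ requires control of the purely imaginary powers $(-A_j)^{-is}$ on the boundary of the strip, which a general sectorial operator need not admit. The interpolation bound $\norm{R(i\gw,A_j)(-A_j)^{-\gd}}=\Omi(\abs{\gw}^{(\ga_j-\gd)^+})$ is nevertheless true, but you should derive it from the Moment Inequality $\norm{(-A_j)^{-\gd}R}\leq M\norm{R}^{1-\gd/\ga_j}\norm{(-A_j)^{-\ga_j}R}^{\gd/\ga_j}$ (as the paper does in Lemma~\ref{lem:BCintprops}), which holds without any bounded-imaginary-powers assumption.

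Second, and more seriously: in the endpoint case $\dim Y<\infty$, $\gb/\ga_1+\gg/\ga_2=1$, a ``Cauchy--Schwarz pairing'' cannot verify the integral condition of Lemma~\ref{lem:unifbddconds}. The quantity you must control is $\sup_{\xi>0}\xi\int_\R\norm{R(\xi+i\eta,A_1)B}^2\norm{CR(\xi+i\eta,A_2)x_2}^2\,d\eta$ (plus its adjoint counterpart). Applying Cauchy--Schwarz to the product of the two squared norms produces fourth powers $\int\norm{R(\cdot,A_1)B}^4\,d\eta$, which Lemma~\ref{lem:unifbddconds} does not control; and the alternative of bounding one factor in sup-norm fails because $R(\gl,A_2)(-A_2)^{-\gg}$ is \emph{not} uniformly bounded on $\overline{\C^+}$ when $\gg<\ga_2$. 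The correct step, which is the crux of the paper's Lemma~\ref{lem:BCintprops}, is to first apply the Moment Inequality to each factor, obtaining $\norm{R_1B}\norm{CR_2x}\leq\tilde M\norm{R_1B_{\gb}}^{1-\gb/\ga_1}\norm{R_2x}^{1-\gg/\ga_2}$, and then apply H\"older with the conjugate exponents $p=1/(1-\gb/\ga_1)$ and $q=1/(1-\gg/\ga_2)$ (these are conjugate precisely because $\gb/\ga_1+\gg/\ga_2=1$), so that after H\"older each factor reappears as a square and both resulting integrals are finite by Lemma~\ref{lem:unifbddconds} (the finite rank of $B$ being used for the factor $\norm{R_1B_\gb}^2$). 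Cauchy--Schwarz is the special case $p=q=2$ and only works when $\gb/\ga_1=\gg/\ga_2=1/2$. You also need to carry out the symmetric estimate for $\norm{R(\gl,A)^\ast x}$, pairing $\norm{R(\cdot,A_2)^\ast C^\ast}^2$ with $\norm{B^\ast R(\cdot,A_1)^\ast x_1}^2$, since Gomilko's criterion requires both the resolvent and its adjoint.
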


\begin{theorem}
  \label{thm:polpoltribanach}
  Assume $X_1$, $X_2$, and $Y$ are Banach spaces.
  If $\gb/\ga_1 + \gg/\ga_2> 1$, then the semigroup
  generated by $A$ in~\eqref{eq:Aop} is strongly stable.
   If $\ga = \ga_1+\ga_2$, then there exists $M\geq 1$ such that
  \eq{
  \norm{T(t)A\inv} \leq M\left( \frac{ \ln t}{t} \right)^{1/\ga} \qquad \forall t>0.
  }
\end{theorem}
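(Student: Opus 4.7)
The plan is to first establish strong stability by showing that the off-diagonal block $S(t)$ of the semigroup converges strongly to zero (and is uniformly bounded in operator norm), and then derive a polynomial resolvent estimate on $i\R$ and invoke the Batty--Duyckaerts polynomial decay theorem for Banach spaces to obtain the logarithmic rate.

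For the strong stability, the range conditions~\eqref{eq:BCranconds} give $(-A_1)^\gb B \in \Lin(Y, X_1)$ and $(-A_2^\ast)^\gg C^\ast \in \Lin(Y^\ast, X_2^\ast)$ by the Closed Graph argument from the proof of Lemma~\ref{lem:CRBbdd}. A moment-type interpolation, which carries over to Banach spaces, upgrades the defining polynomial bound to
\eq{
\norm{T_k(t)(-A_k)^{-\delta}} \leq M \min\set{1, t^{-\delta/\ga_k}}, \qquad \delta \geq 0,
}
giving $\norm{T_1(t-s)B}\leq M\min\set{1,(t-s)^{-\gb/\ga_1}}$. Shifting $(-A_2^\ast)^\gg$ across the duality pairing in $\iprod{CT_2(s)x_2}{y^\ast}=\iprod{T_2(s)x_2}{C^\ast y^\ast}$ yields $\norm{CT_2(s)x_2}\leq M\norm{x_2}\min\set{1,s^{-\gg/\ga_2}}$. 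Substituting these into $S(t)x_2 = \int_0^t T_1(t-s)B\cdot CT_2(s)x_2\,ds$ and splitting the integral at $t/2$ (with further splits at the endpoints to handle the bounded regime), the hypothesis $\gb/\ga_1+\gg/\ga_2>1$ forces $\norm{S(t)x_2}\to 0$, and the operator-norm version shows $\sup_{t\geq 0} \norm{S(t)}<\infty$. Hence $T(t)$ is uniformly bounded and strongly stable.

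For the rate, the triangular structure gives $i\R\subset\rho(A)$ together with
\eq{
R(\gl,A)=\pmat{R(\gl,A_1) & R(\gl,A_1)BCR(\gl,A_2) \\ 0 & R(\gl,A_2)}.
}
Polynomial stability of $T_k(t)$ in the Banach setting implies the resolvent bound $\norm{R(i\gw,A_k)}=\Omi(\abs{\gw}^{\ga_k})$ via the forward implication of the Batty--Duyckaerts characterization, and bounding the off-diagonal entry crudely by $\norm{R(i\gw,A_1)}\norm{BC}\norm{R(i\gw,A_2)}$ yields $\norm{R(i\gw,A)}=\Omi(\abs{\gw}^{\ga_1+\ga_2})$. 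Applying the Banach-space Batty--Duyckaerts theorem to the uniformly bounded semigroup $T(t)$ then produces the stated estimate $\norm{T(t)A\inv}\leq M(\ln t/t)^{1/\ga}$ with $\ga=\ga_1+\ga_2$. The main obstacles are (i)~the convolution estimate in the regime where $\gb/\ga_1$ or $\gg/\ga_2$ may exceed one, which requires a careful endpoint split and Beta-integral asymptotics for $\int_0^1 u^{-b}(1-u)^{-a}\,du$ to conclude $\sup_t\norm{S(t)}<\infty$ and $\norm{S(t)x_2}\to 0$; and (ii)~justifying the identification of $(-A_2^\ast)^{-\gg}$ with the Banach-space adjoint of $(-A_2)^{-\gg}$ so that the duality manipulation producing the bound on $\norm{CT_2(s)x_2}$ is valid without reflexivity assumptions.
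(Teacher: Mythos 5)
Your handling of uniform boundedness (the convolution estimate for $S(t)$) and of the decay rate (the triangular resolvent formula from Lemma~\ref{lem:trispec}, the crude bound $\norm{R(i\gw,A_1)BCR(i\gw,A_2)}\leq\norm{R(i\gw,A_1)}\norm{BC}\norm{R(i\gw,A_2)}=\Omi(\abs{\gw}^{\ga_1+\ga_2})$, and the Banach-space Batty--Duyckaerts theorem) is essentially the paper's argument; the paper splits the integral at $1$ and $t-1$ and uses H\"older's inequality where you split at $t/2$, but these are interchangeable. The genuine divergence is in how strong stability is obtained. The paper never proves $S(t)x_2\to 0$ directly: having shown $\sup_{t\geq 0}\norm{T(t)}<\infty$ and $i\R\subset\rho(A)$, it invokes the Arendt--Batty--Lyubich--V\~{u} type result \cite[Cor. 4.2]{ChiTom07}, which is valid on Banach spaces since the boundary spectrum is empty.

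Your direct route has a gap in the boundary cases permitted by the hypothesis $\gb/\ga_1+\gg/\ga_2>1$, namely when one of the exponents vanishes. If $\gb=0$ and $\gg/\ga_2>1$, your operator-norm bounds give only $\norm{T_1(t-s)B}\leq M\norm{B}$ on the piece $\int_0^{t/2}$, so that piece is controlled by $M\norm{B}\,M\norm{x_2}\int_0^\infty\min\set{1,s^{-\gg/\ga_2}}\,ds$, which is finite but does not tend to zero; the case $\gg=0$, $\gb/\ga_1>1$ fails symmetrically on $\int_{t/2}^t$. So the claim that the stated estimates ``force $\norm{S(t)x_2}\to 0$'' is not justified as written. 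The gap is repairable --- truncate at a fixed $\tau$, use that $\int_\tau^\infty\norm{CT_2(s)x_2}\,ds$ is small, and note that the strong convergence $T_1(t)\to 0$ is uniform on the compact set $\setm{BCT_2(s)x_2}{0\leq s\leq\tau}$ --- but the cleaner fix is simply to deduce strong stability from uniform boundedness and $\gs(A)\subset\C^-$ as the paper does, which also spares you the duality subtleties you flag concerning $(-A_2^\ast)^{\gg}C^\ast$ in the non-reflexive setting (for the uniform-boundedness step the paper only needs the bounded extension $C_\gg$ of $C(-A_2)^\gg$, cf. Remark~\ref{rem:CBRbddmodremark}).
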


If one of the subsystems is exponentially stable, then the requirements on the exponents $\gb$ and $\gg$ can be omitted completely.

\begin{theorem}
  \label{thm:polexptri}
  If $T_1(t)$ is exponentially stable, then the semigroup $T(t)$ generated by $A$ in~\eqref{eq:Aop} is polynomially stable with $\ga=\ga_2$. Similarly, if $T_2(t)$ is exponentially stable, then $T(t)$ is polynomially stable with $\ga=\ga_1$.
\end{theorem}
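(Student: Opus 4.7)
\smallskip
\noindent\textbf{Proof proposal.} My plan is to verify the three ingredients characterising polynomial stability via Lemma~\ref{lem:BorTomresolvent}: uniform boundedness of $T(t)$, the inclusion $i\R\subset \rho(A)$, and the growth estimate $\norm{R(i\gw,A)}=\Omi(\abs{\gw}^{\ga})$ with $\ga$ equal to the polynomial exponent of the non-exponential factor. The triangular structure of $A$ should make each of these steps essentially a two-line computation, since the off-diagonal block of the semigroup is a convolution and the off-diagonal block of the resolvent is just a product.

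For uniform boundedness, I would treat only the off-diagonal entry $S(t)$, since the diagonal entries $T_1(t)$ and $T_2(t)$ are already uniformly bounded by assumption. In the case that $T_1(t)$ is exponentially stable, say $\norm{T_1(t)}\leq M_1 e^{-\gw_0 t}$, I use the representation $S(t)x_2=\int_0^t T_1(t-s)BC T_2(s)x_2\,ds$ together with the uniform bound on $T_2(t)$ to estimate
\eq{
\norm{S(t)} \leq M_1 \norm{BC} \sup_{s\geq 0}\norm{T_2(s)} \int_0^t e^{-\gw_0(t-s)}ds \leq \frac{M_1 M_2 \norm{BC}}{\gw_0}.
}
If $T_2(t)$ is exponentially stable, the change of variables $s\mapsto t-s$ gives $S(t)x_2=\int_0^t T_1(s)BC T_2(t-s)x_2\,ds$ and the same bound follows.

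For the resolvent on the imaginary axis, I observe that $i\R\subset \rho(A_1)\cap \rho(A_2)$ by polynomial (or exponential) stability of each diagonal semigroup, so the triangular formula
\eq{
R(\gl,A)=\pmat{R(\gl,A_1) & R(\gl,A_1)BC R(\gl,A_2)\\ 0 & R(\gl,A_2)}
}
is valid for all $\gl\in i\R$; in particular $i\R\subset\rho(A)$. If $T_1(t)$ is exponentially stable then $\norm{R(i\gw,A_1)}$ is uniformly bounded on $\R$, whereas $\norm{R(i\gw,A_2)}=\Omi(\abs{\gw}^{\ga_2})$ by Lemma~\ref{lem:BorTomresolvent}(b); the off-diagonal block is then majorised by $\norm{R(i\gw,A_1)}\norm{BC}\norm{R(i\gw,A_2)}=\Omi(\abs{\gw}^{\ga_2})$, and the overall resolvent satisfies $\norm{R(i\gw,A)}=\Omi(\abs{\gw}^{\ga_2})$. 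A symmetric argument handles the case where $T_2(t)$ is exponentially stable, giving $\norm{R(i\gw,A)}=\Omi(\abs{\gw}^{\ga_1})$.

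Combining the uniform boundedness of $T(t)$ with the resolvent estimate and invoking the equivalence (a)$\Leftrightarrow$(b) of Lemma~\ref{lem:BorTomresolvent} completes the argument. I do not expect any serious obstacle here; the only point requiring mild care is that no smoothness hypotheses on $BC$ are available, so the exponential decay of one diagonal block must carry the full weight of both the temporal estimate on $S(t)$ and the boundedness of the off-diagonal resolvent block. The gain over the general polynomial-polynomial case is exactly that an exponential rate dominates any polynomial growth of the other factor without needing fractional-power range conditions.
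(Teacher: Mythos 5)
Your proof is correct, and it differs from the paper's in one worthwhile respect. For the uniform boundedness of $T(t)$ the paper does not estimate $S(t)$ at all: it observes that a polynomially stable semigroup is in particular strongly stable, and then invokes the known result~\cite[Lem.~20]{hamalainenpohjolainen10} that a triangular system with one exponentially stable and one strongly stable diagonal block generates a strongly stable (hence uniformly bounded) semigroup. You instead bound the convolution directly,
\eq{
\norm{S(t)x_2}\leq M_1M_2\norm{BC}\norm{x_2}\int_0^t e^{-\gw_0(t-s)}\,ds\leq \frac{M_1M_2\norm{BC}}{\gw_0}\norm{x_2},
}
with the symmetric change of variables when it is $T_2(t)$ that decays exponentially. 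This is more elementary and self-contained, and it makes transparent exactly where the exponential rate is used; the citation route buys less computation and, as a by-product, strong stability of $T(t)$ before any resolvent analysis. The remaining steps --- $i\R\subset\rho(A)$ via the triangular resolvent formula of Lemma~\ref{lem:trispec}, the estimate $\norm{R(i\gw,A_1)BC R(i\gw,A_2)}\leq\norm{R(i\gw,A_1)}\norm{BC}\norm{R(i\gw,A_2)}=\Omi(\abs{\gw}^{\ga})$ with one factor uniformly bounded, and the appeal to Lemma~\ref{lem:BorTomresolvent} --- coincide with the paper's argument. No gaps.
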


  The above results are stated for upper triangular systems, but the analogous results are also valid for lower triangular systems. 
Indeed, any lower triangular block operator matrix can be transformed into an upper triangular one with a similarity transformation
\eq{
 \pmat{0&I\\I&0} \pmat{A_1&BC\\0&A_2}\pmat{0&I\\I&0}
= \pmat{A_2&0\\BC&A_1}.
}
Since the stability properties considered in this paper are invariant under similarity transformations, Theorems~\ref{thm:polpoltri},~\ref{thm:polpoltribanach}, and~\ref{thm:polexptri} also provide
conditions for stability of semigroups generated by lower triangular block operator matrices.

The rest of the results in this section concern the stability of the semigroup generated by an operator of the form
\eqn{
\label{eq:Aopfull}
A= \pmat{A_1&B_1C_2\\B_2C_1&A_2},
  \qquad \Dom(A)= \Dom(A_1)\times \Dom(A_2)
}
where 
$B_1\in \Lin(Y_1,X_1)$, $B_2\in \Lin(Y_2,X_2)$, $C_1\in \Lin(X_1,Y_2)$, and $C_2\in \Lin(X_2,Y_1)$ satisfy 
\begin{subequations}
  \label{eq:BCrancondsfull} 
  \eqn{
  \label{eq:BCrancondsfull1} 
  \ran(B_1)\subset \Dom( (-A_1)^{\gb_1}), \quad \ran(C_1^\ast ) \subset \Dom( (-A_1^\ast)^{\gg_1})\\
  \label{eq:BCrancondsfull2} 
  \ran(B_2)\subset \Dom( (-A_2)^{\gb_2}), \quad \ran(C_2^\ast ) \subset \Dom( (-A_2^\ast)^{\gg_2})
  }
\end{subequations}
for some $\gb_1,\gb_2,\gg_1,\gg_2\geq 0$.

The following theorem presents conditions for the polynomial stability of $T(t)$. 

\begin{theorem} 
  \label{thm:polpolfull}
  Assume $X_1$ and $X_2$ are Hilbert spaces
  and let one of the following conditions be satisfied:
  \begin{itemize}
    \item[\textup{(i)}] $\gb_1,\gg_1\geq \ga_1$ and $\gb_2,\gg_2\geq\ga_2$
    \item[\textup{(ii)}] $\dim Y_1<\infty$, $\gb_1+\gg_1\geq \ga_1$, and $\gb_2,\gg_2\geq \ga_2$
    \item[\textup{(iii)}] $\dim Y_2<\infty$, $\gb_1,\gg_1\geq \ga_1$, and $\gb_2+\gg_2\geq \ga_2$
    \item[\textup{(iv)}]  $\dim Y_1<\infty$, and $\dim Y_2<\infty$ and  $\gb_k/\ga_k+\gg_l/\ga_l\geq 1$ for every $k,l\in \set{1,2}$.
  \end{itemize}
There exists $\gd>0$ such that if $B_1,C_1,B_2$, and $C_2$ satisfy~\eqref{eq:BCrancondsfull} and
  \eqn{
  \label{eq:polpolfullnorms}
  \norm{(-A_1)^{\gb_1}B_1} \cdot
  \norm{(-A_1^\ast)^{\gg_1}C_1^\ast} \cdot
  \norm{(-A_2)^{\gb_2}B_2} \cdot
  \norm{(-A_2^\ast)^{\gg_2}C_2^\ast} <\gd,
  }
  then the semigroup generated by $A$ in~\eqref{eq:Aopfull} is polynomially stable with $\ga=\max \set{\ga_1,\ga_2}$.
\end{theorem}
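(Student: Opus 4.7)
The plan is to combine the Schur complement representation of $R(\gl,A)$ with the well-known commutation identity $(I-UV)\inv = I + U(I-VU)\inv V$, together with the characterizations of polynomial stability and uniform boundedness from Lemmas~\ref{lem:BorTomresolvent} and~\ref{lem:unifbddconds}. First I would solve the block system $(\gl-A)(x_1,x_2)^T = (y_1,y_2)^T$ by elimination, obtaining
\eq{
x_2 = (I-Q(\gl))\inv[R_2 y_2 + R_2 B_2 C_1 R_1 y_1], \qquad x_1 = R_1 y_1 + R_1 B_1 C_2 x_2,
}
where $R_j = R(\gl,A_j)$ and $Q(\gl) = R_2 B_2 C_1 R_1 B_1 C_2 \in \Lin(X_2)$. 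Thus $\gl\in \rho(A)$ precisely when $I-Q(\gl)$ is boundedly invertible on $X_2$.

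To obtain this invertibility uniformly on $\overline{\C^+}$ I would apply the commutation identity with $U = R_2 B_2 \in \Lin(Y_2,X_2)$ and $V = C_1 R_1 B_1 C_2 \in \Lin(X_2,Y_2)$, reducing the problem to the invertibility of $I-VU = I-(C_1 R_1 B_1)(C_2 R_2 B_2)$ on the (possibly much smaller) space $Y_2$. Since each of the conditions (i)--(iv) implies $\gb_k + \gg_k \geq \ga_k$ for $k=1,2$, applying Lemma~\ref{lem:CRBbdd} to $A_1$ and $A_2$ separately yields
\eq{
\norm{VU} \leq M_0 \prod_{k=1}^2 \norm{(-A_k)^{\gb_k}B_k}\,\norm{(-A_k^\ast)^{\gg_k} C_k^\ast} \leq M_0 \gd
}
uniformly on $\overline{\C^+}$. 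Choosing $\gd$ so that $M_0 \gd < 1$ makes $\norm{VU}<1$, so $(I-VU)\inv$ exists and is uniformly bounded by Neumann series; the commutation identity then produces a uniformly bounded $(I-Q(\gl))\inv$. In particular $i\R\subset \rho(A)$.

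Next I would derive the resolvent estimate $\norm{R(i\gw,A)} = \Omi(\abs{\gw}^\ga)$ with $\ga = \max\set{\ga_1,\ga_2}$ by substituting the expression for $(I-Q)\inv$ back into the Schur formula. The operators appearing split into three types: the growing factors $R_j y_j$, contributing $\Omi(\abs{\gw}^{\ga_j})$; the uniformly bounded middle blocks $C_k R_k B_k$; and the ``boundary'' operators $R_j B_j$ and $C_j R_j$. In case (i) the boundary operators are themselves uniformly bounded, as seen from the factorization $R_j B_j = R_j(-A_j)^{-\ga_j}\cdot (-A_j)^{\ga_j-\gb_j}\cdot(-A_j)^{\gb_j}B_j$ together with $\gb_j\geq \ga_j$ and Lemma~\ref{lem:BorTomresolvent}(c), and symmetrically for $C_j R_j$ from $\gg_j\geq \ga_j$. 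In cases (ii)--(iv) some boundary operators may grow polynomially, but the commutation identity reroutes every such factor through a bounded $C_k R_k B_k$ block before any growing resolvent acts on it; a check of the exponent bookkeeping using $\gb_k/\ga_k+\gg_l/\ga_l\geq 1$ in each subcase then keeps every term at most $\Omi(\abs{\gw}^\ga)$.

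Finally, since $P := A - A_0$ with $A_0 = \diag(A_1,A_2)$ is bounded, $A$ generates a strongly continuous semigroup $T(t)$; it remains to verify that $T(t)$ is uniformly bounded. By Lemma~\ref{lem:unifbddconds} this amounts to controlling $\sup_{\xi>0}\,\xi\int_{-\infty}^\infty \norm{R(\xi+i\eta,A)x}^2 d\eta$ and the adjoint analogue for every $x\in X$. Expanding $R(\xi+i\eta,A)$ via the Schur formula, each resulting term reduces either to $\norm{R_j(\gl)x_j}^2$ (integrable by uniform boundedness of $T_j$ through the first part of Lemma~\ref{lem:unifbddconds}), or to a composition of an $R_j(\gl)B_k$ factor with uniformly bounded pieces and with a bounded finite-rank operator $C_l$; for the latter the second part of Lemma~\ref{lem:unifbddconds} supplies the integral bound in exactly those cases ((ii)--(iv)) where finite-dimensionality of $Y_k$ is assumed. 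Combining uniform boundedness with the resolvent estimate and Lemma~\ref{lem:BorTomresolvent}(b) then yields polynomial stability with exponent $\ga$. The most delicate step will be this last uniform boundedness in cases (ii)--(iv), where the decomposition must simultaneously exploit finite-dimensionality of $Y_k$ and isolate every polynomially-growing resolvent from the square-integrable factors.
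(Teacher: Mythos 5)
Your overall architecture is the same as the paper's: the elimination plus the commutation identity $(I-UV)\inv = I+U(I-VU)\inv V$ is exactly the Schur-complement/Sherman--Morrison--Woodbury computation of the paper's Lemma~\ref{lem:fullspec}; the observation that each of (i)--(iv) forces $\gb_k+\gg_k\geq\ga_k$, so that Lemma~\ref{lem:CRBbdd} makes $\norm{C_kR(\gl,A_k)B_k}$ uniformly small on $\overline{\C^+}$ and a Neumann series inverts the return-difference operator, is precisely how the paper chooses $\gd$; and the final appeal to Lemma~\ref{lem:unifbddconds} is the paper's route to uniform boundedness. So the skeleton is sound and matches the paper.

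The gap is in the step you yourself flag as delicate, and it is not merely bookkeeping. After expanding the Schur formula, a term such as $R_1B_1C_2S_1(\gl)\inv x_2$ contains the summand $R_1B_1\,C_2R_2x_2$, in which the growing factor $R_1B_1$ is \emph{not} rerouted through any bounded block $C_kR_kB_k$: the product $\norm{R_1B_1}\,\norm{C_2R_2x_2}$ appears irreducibly, and analogous products $\norm{R_kB_k}\norm{C_lR_lx_l}$ occur for all four pairs $(k,l)$. Under hypothesis (iv) one may have $0<\gb_1<\ga_1$ and $0<\gg_2<\ga_2$ simultaneously, so both factors grow along $i\R$; multiplying their individual rates gives only $\Omi(\abs{\gw}^{2\ga})$, not $\Omi(\abs{\gw}^{\ga})$, and the finite-rank part of Lemma~\ref{lem:unifbddconds} controls $\xi\int\norm{R(\xi+i\eta,A_k)B_k}^2\,d\eta$ alone, not the weighted integral $\xi\int\norm{R_kB_k}^2\norm{C_lR_lx_l}^2\,d\eta$ when the second factor is also unbounded. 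What is missing is the interpolation argument of the paper's Lemma~\ref{lem:BCintprops}: pick $\gb_0\leq\gb_k$, $\gg_0\leq\gg_l$ with $\gb_0/\ga_k+\gg_0/\ga_l=1$, use the Moment Inequality to get $\norm{R_kB_k}\norm{C_lR_lx_l}\leq M\norm{R_k(-A_k)^{\gb_0}B_k}^{1-\gb_0/\ga_k}\norm{R_lx_l}^{1-\gg_0/\ga_l}$, and then apply H\"older's inequality with exponents $p=1/(1-\gb_0/\ga_k)$ and $q=1/(1-\gg_0/\ga_l)$ inside the frequency integral --- this is also exactly where $\dim Y_k<\infty$ enters, via the finite-rank part of Lemma~\ref{lem:unifbddconds} applied to $(-A_k)^{\gb_0}B_k$. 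Without this ingredient your plan establishes the theorem only under hypothesis (i), where all boundary factors are genuinely uniformly bounded.
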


Written out explicitly, the conditions (iv) for the exponents in Theorem~\ref{thm:polpolfull} become
\eq{
\gb_1/\ga_1+\gg_2/\ga_2\geq 1,\\
\gb_2/\ga_2+\gg_1/\ga_1\geq 1,\\
\gb_1+\gg_1\geq \ga_1,\\
\gb_2+\gg_2\geq \ga_2.
}

As already mentioned, an alternative approach to the stability of $T(t)$ would be to write
\eq{
A = \pmat{A_1&0\\0&A_2} + \pmat{B_1&0\\0&B_2}\pmat{0&C_2\\C_1&0} 
=: A_0 +\Delta_B \Delta_C,
}
and apply the perturbation results in~\cite{Pau12,Pau13a,Pau13c-a}. Here $\Delta_B\in \Lin(Y_1\times Y_2,X)$ and $\Delta_C \in \Lin(X,Y_1\times Y_2)$.
Indeed, we have $\ran(\Delta_B) = \ran(B_1)\times \ran(B_2)$, $\ran(\Delta_C^\ast) = \ran(C_1^\ast)\times \ran(C_2^\ast)$, and
\eq{
\norm{(-A_0)^\gb\Delta_B} 
&= \Norm{\pmat{(-A_1)^\gb B_1& 0\\0&(-A_2)^\gb B_2}}
= \max \set{\norm{(-A_1)^\gb B_1},\norm{(-A_2)^\gb B_2}}\\
\norm{(-A_0^\ast)^\gg\Delta_C^\ast} 
&= \Norm{\pmat{0&(-A_1^\ast)^\gg C_1^\ast\\(-A_2^\ast)^\gg C_2^\ast&0}}
= \max \set{\norm{(-A_1^\ast)^\gg C_1^\ast},\norm{(-A_2^\ast)^\gg C_2^\ast}} 
}
if $\gb\geq \min \set{\gb_1,\gb_2}$ and $\gg \geq \min \set{\gg_1,\gg_2}$.
The conditions on the exponents resulting from the direct application of the perturbation results thus become~\cite[Thm. 6]{Pau13c-a}
\begin{itemize}
  \item[(i)] $\gb_1,\gg_1,\gb_2,\gg_2\geq \max \set{\ga_1,\ga_2}$, or
  \item[(ii)] $\min \set{\gb_1,\gb_2}+\min \set{\gg_1,\gg_2}\geq \max \set{\ga_1,\ga_2}$ if $\dim Y_1<\infty$ and $\dim Y_2<\infty$.
\end{itemize}
If one of the above conditions is satisfied, then the semigroup $T(t)$ is polynomially stable whenever the product 
\eq{
 \max \set{\norm{(-A_1)^\gb B_1},\norm{(-A_2)^\gb B_2}}
\cdot \max \set{\norm{(-A_1^\ast)^\gg C_1},\norm{(-A_2^\ast)^\gg C_2^\ast}} 
}
is small enough.
The conditions in Theorem~\ref{thm:polpolfull} have two advantages over this approach: (1) The conditions on the exponents are less strict in the cases where $\ga_1\neq \ga_2$, or where one of $Y_1$ and $Y_2$ is finite-dimensional and (2) making any one of the norms in~\eqref{eq:polpolfullnorms} small can be used to compensate for the size of the other three norms.

If the semigroup $T_1(t)$ is exponentially stable, it is possible to remove the requirements on the exponents $\gb_1$ and $\gg_1$ from the assumptions.

\begin{theorem} 
  \label{thm:polexpfull}
  Assume $T_1(t)$ is exponentially stable and $\gb_2,\gg_2\geq \ga_2$. 
There exists $\gd>0$ such that if $B_2$, and $C_2$ satisfy~\eqref{eq:BCrancondsfull2} and
  \eq{
  \norm{B_1} \cdot
  \norm{C_1} \cdot
  \norm{(-A_2)^{\gb_2}B_2} \cdot
  \norm{(-A_2^\ast)^{\gg_2}C_2^\ast} <\gd,
  }
  then the semigroup generated by $A$ in~\eqref{eq:Aopfull} is polynomially stable with $\ga=\ga_2$.
  If $\dim Y_2<\infty$, it is sufficient that the exponents satisfy $\gb_2+\gg_2\geq \ga_2$.
\end{theorem}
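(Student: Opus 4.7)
The plan is to verify the three conditions of Definition~\ref{def:polstab} with $\ga=\ga_2$: uniform boundedness of $T(t)$, $i\R\subset \rho(A)$, and $\norm{R(i\gw,A)}=\Omi(\abs{\gw}^{\ga_2})$. The semigroup $T(t)$ exists because $A$ is a bounded perturbation of $A_0=\diag(A_1,A_2)$.

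I would start from the Schur-complement decomposition of $\gl-A$ with respect to $A_2$. For $\gl\in\rho(A_2)$ this takes the form
\eq{
\gl-A=\pmat{S(\gl)&-B_1C_2\\0&\gl-A_2}\pmat{I&0\\-R(\gl,A_2)B_2C_1&I},
}
with $S(\gl)=(\gl-A_1)(I-R(\gl,A_1)B_1C_2R(\gl,A_2)B_2C_1)$. Applying Lemma~\ref{lem:CRBbdd} to $A_2$ gives $\norm{C_2R(\gl,A_2)B_2}\leq M\norm{(-A_2)^{\gb_2}B_2}\norm{(-A_2^\ast)^{\gg_2}C_2^\ast}$ uniformly on $\overline{\C^+}$, since $\gb_2+\gg_2\geq \ga_2$. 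Combined with uniform boundedness of $R(\gl,A_1)$ on $\overline{\C^+}$ (from exponential stability of $T_1$) and the smallness of $\gd$, the middle bracket becomes Neumann-invertible uniformly on $\overline{\C^+}$, so $S(\gl)^{-1}$ is uniformly bounded there and $\overline{\C^+}\subset \rho(A)$.

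For the resolvent bound I would use the explicit inverse
\eq{
R(\gl,A)=\pmat{S(\gl)^{-1}&S(\gl)^{-1}B_1C_2R(\gl,A_2)\\R(\gl,A_2)B_2C_1S(\gl)^{-1}&R(\gl,A_2)+R(\gl,A_2)B_2C_1S(\gl)^{-1}B_1C_2R(\gl,A_2)},
}
and bound each block. When $\gb_2,\gg_2\geq \ga_2$, the factors $C_2R(\gl,A_2)$ and $R(\gl,A_2)B_2$ are uniformly bounded on $\overline{\C^+}$ via Lemma~\ref{lem:BorTomresolvent}(c), yielding $\norm{R(\gl,A)}=\Omi(\abs{\gw}^{\ga_2})$ immediately from $\norm{R(\gl,A_2)}=\Omi(\abs{\gw}^{\ga_2})$. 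In the finite-dimensional case a Stein-type interpolation between $R(\gl,A_2)$ (growth $\abs{\gw}^{\ga_2}$) and $R(\gl,A_2)(-A_2)^{-\ga_2}$ (bounded) produces $\norm{R(\gl,A_2)(-A_2)^{-\theta}}=\Omi(\abs{\gw}^{\ga_2-\theta})$ for $\theta\in[0,\ga_2]$; the product $\norm{R(\gl,A_2)B_2}\cdot \norm{C_2R(\gl,A_2)}$ then has growth $\Omi(\abs{\gw}^{2\ga_2-\gb_2-\gg_2})=\Omi(\abs{\gw}^{\ga_2})$ because $\gb_2+\gg_2\geq \ga_2$, so again $\norm{R(\gl,A)}=\Omi(\abs{\gw}^{\ga_2})$.

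Uniform boundedness of $T(t)$ I verify through Lemma~\ref{lem:unifbddconds}. Using the block expression for $R(\gl,A)$ and its adjoint, the quadratic integrals $\xi\int_{-\infty}^\infty\norm{R(\xi+i\eta,A)x}^2\,d\eta$ reduce to integrals of $R(\gl,A_k)$ applied to specific vectors in $X_k$, which are finite uniformly in $\xi$ since $T_1$ and $T_2$ are uniformly bounded. In the finite-dimensional case, integrals of the form $\xi\int\norm{R(\gl,A_2)B_2}^2 d\eta$ and $\xi\int\norm{C_1R(\gl,A_1)}^2 d\eta$ are bounded by the second part of Lemma~\ref{lem:unifbddconds} applied to $B_2$ and $C_1^\ast$, which have finite-dimensional index space $Y_2$. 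Lemma~\ref{lem:BorTomresolvent} then converts the resolvent bound into polynomial stability with $\ga=\ga_2$. The main obstacle will be this finite-dimensional case: when $\gb_2$ or $\gg_2$ is strictly smaller than $\ga_2$ the products $R(\gl,A_2)B_2$ and $C_2R(\gl,A_2)$ no longer have uniform bounds on $\overline{\C^+}$, so both the pointwise resolvent estimate and the quadratic integrals must couple moment-type interpolation in fractional powers of $A_2$ with the finite-rank integral estimates of Lemma~\ref{lem:unifbddconds}.
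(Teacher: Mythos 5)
Your plan follows the paper's own proof in all essentials: Lemma~\ref{lem:CRBbdd} together with the exponential stability of $T_1(t)$ makes the Schur-complement/Neumann-series construction of $R(\gl,A)$ uniform on $\overline{\C^+}$ for small $\gd$; uniform boundedness is obtained from Lemma~\ref{lem:unifbddconds} combined with moment-inequality interpolation and the finite-rank integral estimates (exactly the content of Lemma~\ref{lem:BCintprops}, which the paper invokes case by case for the conditions~\eqref{eq:RBCRintmix}); and the blockwise resolvent estimate gives $\norm{R(i\gw,A)}=\Omi(\abs{\gw}^{\ga_2})$. The only cosmetic difference is that you take the Schur complement of the $(2,2)$ block where the paper's Lemma~\ref{lem:fullspec} uses the complement $S_1(\gl)$ of $\gl-A_1$; the resulting estimates are identical.
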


\begin{remark}
  \label{rem:explimit}
  Lemma~\ref{lem:BorTomresolvent} shows that on a Hilbert space the exponential stability of a semigroup can be seen as a ``limit case'' of polynomial stability with exponent $\ga=0$. Indeed, if $\gs(A)\subset \C^-$ and if $T(t)$ is uniformly bounded, then the condition $\norm{R(i\gw,A)}=\Omi(\abs{\gw}^\ga)$ with $\ga=0$ is equivalent to $T(t)$ being exponentially stable due to the characterization by Gearhart, Pr\"{u}ss, and Greiner~\cite[Thm. V.1.11]{engelnagel},~\cite[Thm. 5.1.5]{curtainzwart}.
  Comparing Theorems~\ref{thm:polpolfull} and~\ref{thm:polexpfull} shows that the results on polynomial stability of $T(t)$ agree with this interpretation. In particular, if $T_1(t)$ is exponentially stable, we would then have $\ga_1=0$, and the conditions on $\gb_1$, and $\gg_1$ would be satisfied with the choice $\gb_1=\gg_1=0$. Under the conditions of Theorem~\ref{thm:polpolfull}, the semigroup $T(t)$ would be polynomially stable with $\ga = \max \set{\ga_1,\ga_2}=\ga_2$.
   The conditions in Theorems~\ref{thm:polpoltri} and~\ref{thm:polexptri} are related to each other in a similar way. 
\end{remark}

Applying a similarity transformation
\eq{
 \pmat{0&I\\I&0} \pmat{A_1&B_1C_2\\B_2C_1&A_2}\pmat{0&I\\I&0}
=  \pmat{A_2&B_2C_1\\B_1C_2&A_1},
}
yields the following analogue of Theorem~\ref{thm:polexpfull} concerning the case where $T_2(t)$ is exponentially stable.

\begin{corollary} 
  \label{cor:polexpfullinv}
  Assume $T_2(t)$ is exponentially stable and $\gb_1,\gg_1\geq \ga_1$. 
There exists $\gd>0$ such that if $B_1$, and $C_1$ satisfy~\eqref{eq:BCrancondsfull1} and
  \eq{
  \norm{(-A_1)^{\gb_1}B_1} \cdot
  \norm{(-A_1^\ast)^{\gg_1}C_1^\ast} \cdot
  \norm{B_2} \cdot
  \norm{C_2} <\gd,
  }
  then the semigroup generated by $A$ in~\eqref{eq:Aopfull} is polynomially stable with $\ga=\ga_1$.
  If $\dim Y_1<\infty$, it is sufficient that the exponents satisfy $\gb_1+\gg_1\geq \ga_1$.
\end{corollary}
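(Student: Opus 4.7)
The plan is to deduce the corollary from Theorem~\ref{thm:polexpfull} by the similarity transformation already displayed just before the statement. Let $J = \pmat{0 & I \\ I & 0}$ on $X = X_1 \times X_2$. Since $J^2 = I$ and $J$ is unitary on the Hilbert space $X$, the operator $\tilde{A} := J A J$ generates the semigroup $\tilde{T}(t) = J T(t) J$, and $T(t)$ is polynomially stable with exponent $\ga$ if and only if $\tilde{T}(t)$ is. By the computation in the excerpt,
\[
\tilde{A} = \pmat{A_2 & B_2 C_1 \\ B_1 C_2 & A_1}.
\]

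Next I would read off the hypotheses of Theorem~\ref{thm:polexpfull} for $\tilde A$ under the relabelling $\tilde A_1 = A_2$, $\tilde A_2 = A_1$, $\tilde B_1 = B_2$, $\tilde C_1 = C_2$, $\tilde B_2 = B_1$, $\tilde C_2 = C_1$, $\tilde Y_1 = Y_2$, $\tilde Y_2 = Y_1$. In this notation the semigroup generated by $\tilde A_1$ is exponentially stable by assumption, the semigroup generated by $\tilde A_2 = A_1$ is polynomially stable with $\tilde\ga_2 = \ga_1$, and the exponents satisfy $\tilde\gb_2 = \gb_1 \geq \ga_1 = \tilde\ga_2$, $\tilde\gg_2 = \gg_1 \geq \tilde\ga_2$. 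The range conditions~\eqref{eq:BCrancondsfull2} for the relabelled operators $\tilde B_2 = B_1$ and $\tilde C_2^\ast = C_1^\ast$ reduce term for term to~\eqref{eq:BCrancondsfull1}. Finally, the smallness condition of Theorem~\ref{thm:polexpfull} becomes
\[
\norm{\tilde B_1}\cdot\norm{\tilde C_1}\cdot\norm{(-\tilde A_2)^{\tilde\gb_2}\tilde B_2}\cdot\norm{(-\tilde A_2^\ast)^{\tilde\gg_2}\tilde C_2^\ast}
= \norm{B_2}\cdot\norm{C_2}\cdot\norm{(-A_1)^{\gb_1}B_1}\cdot\norm{(-A_1^\ast)^{\gg_1}C_1^\ast},
\]
which is exactly the product in the statement of the corollary.

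Theorem~\ref{thm:polexpfull} therefore supplies a $\gd > 0$ such that whenever this product is less than $\gd$, the semigroup $\tilde T(t)$ is polynomially stable with exponent $\tilde\ga_2 = \ga_1$; by similarity, so is $T(t)$. The finite-dimensional refinement follows identically: under the relabelling $\tilde Y_2 = Y_1$, so the hypothesis $\dim Y_1 < \infty$ is precisely what Theorem~\ref{thm:polexpfull} requires in order to weaken the exponent condition to $\tilde\gb_2 + \tilde\gg_2 \geq \tilde\ga_2$, i.e.\ $\gb_1 + \gg_1 \geq \ga_1$.

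There is essentially no analytic obstacle; the whole argument is bookkeeping. The only point where care is needed is checking that the relabelling correctly pairs each factor $B_i C_j$ of $A$ with the corresponding $\tilde B_k \tilde C_l$ of $\tilde A$, including on the adjoint side so that $(-\tilde A_2^\ast)^{\tilde\gg_2}\tilde C_2^\ast$ matches $(-A_1^\ast)^{\gg_1}C_1^\ast$. Since $J$ is an involutive isomorphism, it preserves norms, fractional powers of the diagonal blocks, spectra, and polynomial stability exponents, so no further verification is required.
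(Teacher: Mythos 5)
Your proposal is correct and is exactly the argument the paper intends: the corollary is obtained by conjugating with the unitary involution $J=\pmat{0&I\\I&0}$ and applying Theorem~\ref{thm:polexpfull} to the relabelled operator, and your bookkeeping of the relabelling (including the adjoint-side norms and the identification $\tilde Y_2=Y_1$ for the finite-dimensional refinement) is accurate. Nothing further is needed.
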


We begin by presenting the proofs for the results concerning semigroup generated by triangular operator matrices. The results on the stability of semigroup generated by full operator matrices are proved in Section~\ref{sec:fullsys}.

\section[Triangular Operator Matrices]{Semigroups Generated By Triangular Operator Matrices}
\label{sec:trisys}

In this section we present the proofs for Theorems~\ref{thm:polpoltri},~\ref{thm:polpoltribanach}, and~\ref{thm:polexptri} concerning the stability of the semigroup generated by the triangular operator matrix
\eq{
A= \pmat{A_1&BC\\0&A_2}.
}
For a triangular operator matrix, the spectral properties of $A$ are determined by those of the operators $A_1$ and $A_2$.

\begin{lemma} 
  \label{lem:trispec}
  Assume $X_1$, $X_2$, and $Y$ are Banach spaces.
  The spectrum of $A$ satisfies $\gs(A)\subset \C^-$ and
\eqn{
\label{eq:trires}
  R(\gl,A) = \pmat{R(\gl,A_1) & R(\gl,A_1)BC R(\gl,A_2)\\0&R(\gl,A_2)} 
}
for every $\gl\in \overline{\C^+}$.
\end{lemma}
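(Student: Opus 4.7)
The natural approach is simply to verify that the right-hand side of \eqref{eq:trires} really is a two-sided inverse of $\lambda - A$ whenever $\lambda \in \rho(A_1)\cap \rho(A_2)$, and then invoke the standing polynomial stability hypothesis to control the spectrum. The lemma contains nothing deep; the point is mainly to record the explicit formula for later use in Section~\ref{sec:trisys}.

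First I would fix $\lambda \in \rho(A_1)\cap\rho(A_2)$ and denote by $R_\lambda$ the operator on the right of \eqref{eq:trires}. Since $B\in\Lin(Y,X_1)$, $C\in\Lin(X_2,Y)$ and the two diagonal resolvents are bounded, $R_\lambda\in \Lin(X_1\times X_2)$. Because each column of $R_\lambda$ ends on the right with a resolvent, $\ran R_\lambda \subset \Dom(A_1)\times \Dom(A_2) = \Dom(A)$. A direct block computation then gives
\[
(\lambda - A)R_\lambda = \pmat{\lambda - A_1 & -BC \\ 0 & \lambda - A_2} R_\lambda,
\]
whose diagonal entries are the identities $I_{X_1}, I_{X_2}$ and whose $(1,2)$-entry is
\[
(\lambda - A_1)R(\lambda,A_1)BC\,R(\lambda,A_2) - BC\,R(\lambda,A_2) = BC\,R(\lambda,A_2) - BC\,R(\lambda,A_2) = 0.
\]
A symmetric computation verifies $R_\lambda(\lambda - A) = I$ on $\Dom(A)$. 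Hence $\lambda\in\rho(A)$ and $R(\lambda,A)=R_\lambda$, which establishes $\rho(A_1)\cap\rho(A_2)\subset \rho(A)$ together with \eqref{eq:trires}.

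It remains to show $\overline{\C^+}\subset \rho(A_1)\cap\rho(A_2)$, which is where the standing polynomial stability hypothesis enters. By Definition~\ref{def:polstab}, $i\R\subset \rho(A_k)$ for $k=1,2$, and uniform boundedness of each $T_k(t)$ forces $\gs(A_k)\subset \overline{\C^-}$; together these give $\gs(A_k)\subset \C^-$ and hence $\overline{\C^+}\subset \rho(A_1)\cap \rho(A_2)\subset \rho(A)$. This upgrades the resolvent formula to all $\lambda\in\overline{\C^+}$ and also proves $\gs(A)\subset \C^-$.

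There is no real obstacle. The one bookkeeping point worth checking is that the off-diagonal block $R(\lambda,A_1)BC\,R(\lambda,A_2)$ really lands in $\Dom(A_1)$ (so that $A_1$ may be applied to it without leaving the space), which is immediate from the boundedness of $BC$ and the fact that $\ran R(\lambda,A_1)=\Dom(A_1)$; everything else is elementary block matrix arithmetic.
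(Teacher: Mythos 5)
Your proposal is correct and follows essentially the same route as the paper: the paper's proof simply notes that for $\gl\in\overline{\C^+}$ one has $\gl\in\rho(A_1)\cap\rho(A_2)$ (by the standing polynomial stability assumptions) and that "a direct computation" shows the displayed block matrix is a bounded inverse of $\gl-A$. You have merely written out that direct computation and the domain bookkeeping explicitly, which is fine.
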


\begin{proof}
  Let $\gl\in \overline{\C^+}$ be arbitrary. Since $\gl \in \rho(A_1)$ and $\gl \in \rho(A_2)$, a direct computation shows that
  $\gl-A$ has a bounded inverse given by the right-hand side of~\eqref{eq:trires}. This immediately implies   $\gl\in\rho(A)$.
\end{proof}

\begin{lemma}
  \label{lem:triunifbdd}
  If $X_1$, $X_2$, and $Y$ are Banach spaces and $\gb/\ga_1+\gg/\ga_2>1$, then the semigroup $T(t)$ is uniformly bounded.  
\end{lemma}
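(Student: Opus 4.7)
The plan is to exploit the triangular structure of $A$ directly. The semigroup generated by $A$ has the explicit form
\[
T(t) = \pmat{T_1(t) & S(t) \\ 0 & T_2(t)}, \qquad S(t) x_2 = \int_0^t T_1(t-s) BC T_2(s) x_2 \, ds,
\]
so, since $T_1(t)$ and $T_2(t)$ are uniformly bounded by polynomial stability, uniform boundedness of $T(t)$ reduces to establishing $\sup_{t>0}\norm{S(t)}<\infty$.

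First I would derive pointwise decay bounds for the two boundary factors appearing in the integrand. Using $\ran(B)\subset \Dom((-A_1)^\gb)$ and the Closed Graph Theorem (exactly as in the proof of Lemma~\ref{lem:CRBbdd}), $(-A_1)^\gb B \in \Lin(Y,X_1)$; writing $T_1(r)B = T_1(r)(-A_1)^{-\gb}\cdot (-A_1)^\gb B$ and combining the equivalence (a)$\Leftrightarrow$(a$'$) in Lemma~\ref{lem:BorTomresolvent} (or its Banach-space analogue via the moment inequality) with the uniform bound $\norm{T_1(r)(-A_1)^{-\gb}}\leq M\norm{(-A_1)^{-\gb}}$ yields
\[
\norm{T_1(r) B} \leq M_1 \min\set{1,\, r^{-\gb/\ga_1}}, \qquad r>0.
\]
Similarly, Remark~\ref{rem:CBRbddmodremark} supplies a bounded extension $C_\gg \in \Lin(X_2,Y)$ of $C(-A_2)^\gg$, so that $C = C_\gg (-A_2)^{-\gg}$ and the same reasoning applied to $T_2$ produces
\[
\norm{C T_2(s)} = \norm{C_\gg T_2(s)(-A_2)^{-\gg}} \leq M_2 \min\set{1,\, s^{-\gg/\ga_2}}, \qquad s>0.
\]

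Substituting these two estimates into the integral defining $S(t)$ reduces the problem to proving uniform boundedness in $t>0$ of
\[
I(t) = \int_0^t \min\set{1,\,(t-s)^{-\gb/\ga_1}}\, \min\set{1,\,s^{-\gg/\ga_2}}\, ds.
\]
For $t\leq 1$ the trivial bound $I(t)\leq t$ suffices. For $t\geq 2$ I would split at $s=t/2$: on $[0,t/2]$ one has $t-s\geq t/2$ and on $[t/2,t]$ one has $s\geq t/2$, so in each half the appropriate $\min\set{1,\cdot}$ factor can be replaced by $(t/2)^{-\gb/\ga_1}$ or $(t/2)^{-\gg/\ga_2}$ and pulled out of the integral, leaving an explicit one-dimensional integral whose residual $t$-dependence, combined with the pulled-out power, has exponent $1-\gb/\ga_1-\gg/\ga_2<0$ by hypothesis.

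The main obstacle is the case analysis in this final step: depending on whether each of $\gb/\ga_1$ and $\gg/\ga_2$ is greater than, equal to, or less than $1$, the residual integrals $\int_0^{t/2}\min\set{1,s^{-\gg/\ga_2}}\,ds$ (and its symmetric counterpart) evaluate to a constant, to $\log(t/2)$, or to a power of $t/2$, and one must verify in each subcase that the sole assumption $\gb/\ga_1+\gg/\ga_2>1$ is strong enough to dominate the pulled-out factor as $t\to\infty$. Once this elementary but case-heavy calculation is carried out, $\sup_{t>0}\norm{S(t)}<\infty$ follows, and hence so does the uniform boundedness of $T(t)$.
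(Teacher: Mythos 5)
Your proposal is correct and follows essentially the same route as the paper: reduce to $\sup_{t}\norm{S(t)}<\infty$, factor $T_1(t-s)B$ and $CT_2(s)$ through the fractional powers $(-A_1)^{\beta}B$ and the bounded extension $C_\gamma$ to obtain the decay rates $(t-s)^{-\beta/\alpha_1}$ and $s^{-\gamma/\alpha_2}$, and then bound the resulting convolution-type integral. The only difference is in the elementary final step: you split the scalar integral at $s=t/2$ and run a case analysis on $\min\set{1,\cdot}$ factors, whereas the paper splits off the end intervals $[0,1]$ and $[t-1,t]$ and applies H\"older's inequality with exponents $p=c\alpha_1/\beta$, $q=c\alpha_2/\gamma$ (where $c=\beta/\alpha_1+\gamma/\alpha_2$) on the middle piece; both yield the required uniform bound.
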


\begin{proof}
  Since the semigroups $T_1(t)$ and $T_2(t)$ are uniformly bounded, the semigroup $T(t)$ is uniformly bounded if (and only if) the operators $S(t)$ are uniformly bounded with respect to $t\geq 0$. Denote $M_1=\sup_{t\geq 0}\norm{T_1(t)}$ and $M_2=\sup_{t\geq 0}\norm{T_2(t)}$. Moreover, let $M_\gb,M_\gg\geq 1$ be such that 
  \eq{
  \norm{T_1(t)(-A_1)^{-\gb}}\leq \frac{M_\gb}{t^{\gb/\ga_1}}, 
  \qquad \mbox{and} \qquad
  \norm{T_2(t)(-A_2)^{-\gg}}\leq \frac{M_\gg}{t^{\gg/\ga_2}}, 
  }
  for all $t>0$.
  Let $x\in X_2$ and $t\geq 2$.
  If we denote $B_\gb = (-A_1)^\gb B\in \Lin(Y,X_1)$ and $C_\gg = \overline{C(-A_2)^\gg}\in \Lin(X_2,Y)$, then for $s\in [1,t-1]$ we have
  \eq{
  \norm{T_1(t-s)BCT_2(s)x}
  &=\norm{T_1(t-s)(-A_1)^{-\gb}(-A_1)^\gb BC(-A_2)^\gg  T_2(s)(-A_2)^{-\gg}x}\\
  &\leq  \norm{T_1(t-s)(-A_1)^{-\gb}}\norm{B_\gb}\norm{C_\gg}  \norm{  T_2(s)(-A_2)^{-\gb}}\norm{x}\\
  &\leq  M_\gb M_\gg\norm{B_\gb}\norm{C_\gg} \norm{x} \cdot \frac{1}{(t-s)^{\gb/\ga_1}}\cdot \frac{1}{s^{\gg/\ga_2}}
  }
   and thus
\eq{
\norm{S(t)x}
&\leq \int_0^t \norm{T_1(t-s)BCT_2(s)x}ds\\
&\leq 
\int_0^1 \norm{T_1(t-s)}\norm{BC}\norm{T_2(s)}\norm{x}ds
+\int_1^{t-1}\norm{T_1(t-s)BCT_2(s)x}ds\\
&\quad +\int_{t-1}^t \norm{T_1(t-s)} \norm{BC} \norm{T_2(s)} \norm{x}ds\\
&\leq 2M_1M_2\norm{BC}\norm{x}
+ M_\gb M_\gg \norm{B_\gb}\norm{C_\gg} \norm{x} \int_1^{t-1} \frac{1}{(t-s)^{\gb/\ga_1}}\cdot \frac{1}{s^{\gg/\ga_2}}ds.
}
Since $x\in X_2$ was arbitrary, we have that $\sup_{t\geq 0}\norm{S(t)}<\infty$ if
\eqn{
\label{eq:triunifbddintcond}
\sup_{t\geq 2} \; \int_1^{t-1} (t-s)^{-\gb/\ga_1} s^{-\gg/\ga_2}ds<\infty.
}
If $\gg/\ga_2>1$, then
\eq{
 \int_1^{t-1} (t-s)^{-\gb/\ga_1} s^{-\gg/\ga_2}ds
 \leq \int_1^{t-1}  s^{-\gg/\ga_2}ds
 \leq \int_1^\infty  s^{-\gg/\ga_2}ds<\infty,
}
and if $\gb/\ga_1>1$, then
\eq{
 \int_1^{t-1} (t-s)^{-\gb/\ga_1} s^{-\gg/\ga_2}ds
 \leq \int_1^{t-1} (t-s)^{-\gb/\ga_1}ds
 = \int_1^{t-1} r^{-\gb/\ga_1}dr
 \leq \int_1^\infty r^{-\gb/\ga_1}dr <\infty.
}
In both of these cases~\eqref{eq:triunifbddintcond} is satisfied. It remains to consider the case where $0<\gb/\ga_1\leq 1$ and $0<\gg/\ga_2 \leq 1$ satisfy $\gb/\ga_1+\gg/\ga_2>1$. Choose $c=\gb/\ga_1+\gg/\ga_2> 1$, $p=c\ga_1/\gb$, and $q=c\ga_2/\gg$. Then
  \eq{
  \frac{1}{p} + \frac{1}{q} = \frac{1}{c} (\gb/\ga_1+\gg/\ga_2) = 1.
  }  
  Since $p>\ga_1/\gb\geq 1$, and $q>\ga_2/\gg\geq 1$, and since $p\cdot \gb/\ga_1=c>1$ and $q\cdot \gg/\ga_2=c>1$, the Hölder inequality with exponents $p$ and $q$ shows that
\eq{
 \MoveEqLeft\int_1^{t-1} (t-s)^{-\gb/\ga_1}\cdot s^{-\gg/\ga_2}ds
 \leq \Bigl( \int_1^{t-1} (t-s)^{-p\cdot\gb/\ga_1} ds \Bigr)^{1/p} \Bigl( \int_1^{t-1} s^{-q\cdot\gg/\ga_2}ds \Bigr)^{1/q}\\
 &= \Bigl( \int_1^{t-1} r^{-c} dr \Bigr)^{1/p} \Bigl( \int_1^{t-1} s^{-c}ds \Bigr)^{1/q}
 \leq \Bigl( \int_1^\infty r^{-c} dr \Bigr)^{1/p} \Bigl( \int_1^\infty s^{-c}ds \Bigr)^{1/q} <\infty.
 }
 This shows that~\eqref{eq:triunifbddintcond} holds also in the case where $0<\gb/\ga_1\leq 1$, $0<\gg/\ga_2 \leq 1$, and $\gb/\ga_1+\gg/\ga_2>1$. This finally concludes that $\sup_{t\geq 0}\norm{S(t)}<\infty$, and thus $T(t)$ is uniformly bounded.
\end{proof}

\begin{lemma}
  \label{lem:BCintprops}
  Assume $X_1$, $X_2$ are Hilbert spaces and $Y_1$ and $Y_2$  are Banach spaces, and that $\tilde{B}\in \Lin(Y_1,X_1)$ and $\tilde{C}\in \Lin(X_2,Y_2)$ satisfy $\ran(\tilde{B})\subset \Dom( (-A_1)^\gb)$ and $\ran(\tilde{C}^\ast)\subset \Dom( (-A_2^\ast)^\gg)$ for some $\gb,\gg\geq 0$.
  If $\gb/\ga_1+ \gg/\ga_2\geq 1$,
  then $\norm{R(i\gw,A_1)\tilde{B}} \norm{\tilde{C}R(i\gw,A_2)} = \Omi(\abs{\gw}^\ga)$ with $\ga = \max \set{\ga_1,\ga_2}$. Moreover, if $\gb\geq \ga_1$, we then have
  \eqn{
  \label{eq:RBCRint}
  \sup_{\xi>0} \; \xi \int_{-\infty}^\infty \norm{R(\xi+i\eta,A_1)\tilde{B}}^2 \norm{\tilde{C}R(\xi+i\eta,A_2)x}^2 d\eta <\infty \qquad \forall x\in X_2.
  }
If $\dim Y_1<\infty$, then~\eqref{eq:RBCRint} is satisfied whenever $\gb/\ga_1+ \gg/\ga_2\geq 1$.
\end{lemma}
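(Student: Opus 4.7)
The lemma has three parts --- a growth estimate on $i\R$ and two integral estimates --- and I plan to prove them in order. My shared toolkit consists of: \textup{(a)}~the resolvent decay $\norm{R(\gl, A_k)(-A_k)^{-\delta}} = \Omi(\abs{\gl}^{\max\set{\ga_k-\delta, 0}})$ on $\overline{\C^+}$, obtainable for $\delta \leq \ga_k$ by interpolating Lemma~\ref{lem:BorTomresolvent}(c) with Lemma~\ref{lem:BorTomresolvent}(b), and for $\delta \geq \ga_k$ by factoring out a bounded power of $-A_k$; \textup{(b)}~the bounded extension $\tilde C_\gg := \overline{\tilde C(-A_2)^\gg} \in \Lin(X_2, Y_2)$ with $\norm{\tilde C_\gg} \leq \norm{(-A_2^\ast)^\gg \tilde C^\ast}$, obtained by duality as in the proof of Lemma~\ref{lem:CRBbdd}; and \textup{(c)}~the integral estimates of Lemma~\ref{lem:unifbddconds} for the uniformly bounded semigroups $T_1(t)$ and $T_2(t)$.

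For the growth estimate, I would factor $R(i\gw,A_1)\tilde B = [R(i\gw,A_1)(-A_1)^{-\gb}]\,(-A_1)^\gb\tilde B$ and $\tilde C R(i\gw,A_2) = \tilde C_\gg\,[(-A_2)^{-\gg} R(i\gw,A_2)]$, and apply \textup{(a)} to each bracketed factor. Multiplying yields the product exponent $\max\set{\ga_1-\gb,0}+\max\set{\ga_2-\gg,0}$, and the hypothesis $\gb/\ga_1+\gg/\ga_2 \geq 1$ forces this to be at most $\ga = \max\set{\ga_1,\ga_2}$: the only nonvacuous case is $\gb<\ga_1$, $\gg<\ga_2$, in which, assuming WLOG $\ga_1 \leq \ga_2$, multiplying the hypothesis by $\ga_1$ yields $\gb+\gg\ga_1/\ga_2 \geq \ga_1$, whence $\gb+\gg \geq \ga_1$ and $(\ga_1-\gb)+(\ga_2-\gg) \leq \ga_2 = \ga$.

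For the integral estimate under $\gb \geq \ga_1$, \textup{(a)} gives $\norm{R(\xi+i\eta,A_1)\tilde B} \leq M$ uniformly on $\overline{\C^+}$. Pulling this factor out and dominating $\norm{\tilde C R(\xi+i\eta,A_2)x} \leq \norm{\tilde C_\gg}\,\norm{R(\xi+i\eta,A_2)(-A_2)^{-\gg}x}$ reduces the claim to uniform boundedness of $\xi \int \norm{R(\xi+i\eta,A_2)(-A_2)^{-\gg}x}^2\,d\eta$, which is exactly Lemma~\ref{lem:unifbddconds} applied to $A_2$ with the vector $(-A_2)^{-\gg}x \in X_2$.

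The main obstacle is the finite-dimensional case. Picking a basis $\set{e_j}_{j=1}^p$ of $Y_1$ and setting $b_j = \tilde B e_j \in \Dom((-A_1)^\gb)$, the Cauchy--Schwarz bound $\norm{R_1\tilde B}^2 \leq \sum_j \norm{R_1 b_j}^2$ in $\C^p$ together with the $\tilde C_\gg$-reduction above reduces the claim to showing
\eq{
\sup_{\xi>0}\; \xi \int_{-\infty}^\infty \norm{R(\xi+i\eta,A_1)b}^2\,\norm{R(\xi+i\eta,A_2)y}^2\, d\eta < \infty
}
for every $b\in \Dom((-A_1)^\gb)$ and $y \in \Dom((-A_2)^\gg)$. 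My plan here is to apply the vector-valued Plancherel identity $\xi\int\norm{R(\xi+i\eta,A_k)v}^2\,d\eta = 2\pi\xi\int_0^\infty e^{-2\xi t}\norm{T_k(t)v}^2\,dt$ to reformulate the integral in the time variable (passing through the autocorrelations of $e^{-\xi t}T_1(t)b$ and $e^{-\xi t}T_2(t)y$), invoke the polynomial decay $\norm{T_k(t)(-A_k)^{-\delta}v} \leq Mt^{-\delta/\ga_k}\norm{(-A_k)^\delta v}$ from Lemma~\ref{lem:BorTomresolvent}(a$'$) with $\delta = \gb,\gg$, and close with a H\"older estimate in the time variable with exponents $p = c\ga_1/\gb$, $q = c\ga_2/\gg$, $c = \gb/\ga_1+\gg/\ga_2 \geq 1$, in direct parallel to the proof of Lemma~\ref{lem:triunifbdd}. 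The cases $\gb \geq \ga_1$ and $\gg \geq \ga_2$ are already covered by (the symmetric analogue of) the second assertion, so only $\gb<\ga_1$, $\gg<\ga_2$ requires this sharper argument, and the hypothesis $\gb/\ga_1+\gg/\ga_2 \geq 1$ is precisely what makes the resulting time integral converge.
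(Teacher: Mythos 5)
The first two parts of your argument are sound and essentially coincide with the paper's: the growth estimate on $i\R$ is obtained there by the same Moment Inequality interpolation (the paper keeps the factors $\norm{R(\gl,A_1)\tilde{B}_{\gb_0}}^{1-\gb_0/\ga_1}\norm{R(\gl,A_2)x}^{1-\gg_0/\ga_2}$ explicit rather than packaging them into a decay rate for $\norm{R(i\gw,A_1)(-A_1)^{-\gb}}$, but the content is identical), and the case $\gb\geq\ga_1$ of the integral estimate is handled exactly as you describe.

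The finite-dimensional case is where your plan has a genuine gap. In the only remaining situation $0<\gb<\ga_1$, $0<\gg<\ga_2$, you reduce to single vectors $b\in\Dom((-A_1)^\gb)$, $y\in\Dom((-A_2)^\gg)$ and propose to pass to the time domain, estimating the autocorrelations $\Phi_1,\Phi_2$ of $e^{-\xi t}T_1(t)b$ and $e^{-\xi t}T_2(t)y$ using only the pointwise decay $\norm{T_k(t)v}\lesssim(1+t)^{-\theta_k}$ with $\theta_1=\gb/\ga_1$, $\theta_2=\gg/\ga_2$. Carrying this out gives $\abs{\Phi_k(s)}\lesssim e^{-\xi\abs{s}}(1+\abs{s})^{-\theta_k}\xi^{\theta_k-1}$ for small $\xi>0$, and hence
\eq{
\xi\int_{-\infty}^\infty\abs{\Phi_1(s)}\,\abs{\Phi_2(s)}\,ds\;\lesssim\;\xi^{\,c-1}\int_0^\infty e^{-2\xi s}(1+s)^{-c}\,ds,\qquad c=\gb/\ga_1+\gg/\ga_2.
}
For $c>1$ this is uniformly bounded, but at the endpoint $c=1$ the right-hand side grows like $\log(1/\xi)$ as $\xi\to0^+$. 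The endpoint is exactly what the finite-rank refinement exists for --- it is the case used in Theorem~\ref{thm:polpoltri} and shown to be sharp in Section~\ref{sec:optex} --- and a ``direct parallel to Lemma~\ref{lem:triunifbdd}'' cannot reach it, since that lemma needs the strict inequality $\gb/\ga_1+\gg/\ga_2>1$ for precisely this reason. The underlying issue is that pointwise norm decay of the orbits is a strictly weaker input than the uniform-in-$\xi$ square-function bounds of Lemma~\ref{lem:unifbddconds}. The paper closes the endpoint by performing the H\"older step in the frequency variable instead: with $\gb_0/\ga_1+\gg_0/\ga_2=1$ the Moment Inequality gives $\norm{R(\gl,A_1)\tilde{B}}\norm{\tilde{C}R(\gl,A_2)x}\lesssim\norm{R(\gl,A_1)\tilde{B}_{\gb_0}}^{1-\gb_0/\ga_1}\norm{R(\gl,A_2)x}^{1-\gg_0/\ga_2}$, and H\"older with $p=1/(1-\gb_0/\ga_1)$, $q=1/(1-\gg_0/\ga_2)$ bounds $\xi\int(\cdots)\,d\eta$ by $\bigl(\xi\int\norm{R(\xi+i\eta,A_1)\tilde{B}_{\gb_0}}^2d\eta\bigr)^{1/p}\bigl(\xi\int\norm{R(\xi+i\eta,A_2)x}^2d\eta\bigr)^{1/q}$, where the single factor of $\xi$ distributes as $\xi^{1/p}\xi^{1/q}$ and each bracket is bounded uniformly in $\xi$ by Lemma~\ref{lem:unifbddconds} (the first using $\dim Y_1<\infty$). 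No compensating power of $\xi$ is needed, so the equality case survives; you should replace your time-domain H\"older step with this frequency-domain one.
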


\begin{proof}
By Lemma~\ref{lem:BorTomresolvent} we can choose $M_0\geq 1$ such that $\norm{R(\gl,A_1)(-A_1)^{-\ga_1}}\leq M_0$ and $\norm{R(\gl,A_2)(-A_2)^{-\ga_2}}\leq M_0$ for all $\gl\in \overline{\C^+}$.

If $\gb/\ga_1\geq 1$, then for every $\gl\in \overline{\C^+}$ and $x\in X_2$ we can estimate
\eq{
\MoveEqLeft \norm{R(\gl,A_1)\tilde{B}} \norm{\tilde{C}R(\gl,A_2) x} 
= \norm{R(\gl,A_1)(-A_1)^{-\ga_1}(-A_1)^{\ga_1}\tilde{B}} \norm{\tilde{C}R(\gl,A_2) x} \\
&\leq \norm{R(\gl,A_1)(-A_1)^{-\ga_1}}\norm{(-A_1)^{\ga_1}\tilde{B}} \norm{\tilde{C}} \norm{R(\gl,A_2) x} \\
&\leq M_0\norm{(-A_1)^{\ga_1}\tilde{B}} \norm{\tilde{C}} \norm{R(\gl,A_2) x} ,
}
which in particular implies $\norm{R(i\gw,A_1)\tilde{B}} \norm{\tilde{C}R(i\gw,A_2)} = \Omi(\abs{\gw}^\ga)$ due to $\norm{R(i\gw,A_2)}=\Omi(\abs{\gw}^\ga)$. Moreover, we then have
\eq{
\MoveEqLeft\sup_{\xi>0} \; \xi \int_{-\infty}^\infty \norm{R(\xi+i\eta,A_1)\tilde{B}}^2 \norm{\tilde{C}R(\xi+i\eta,A_2)x}^2 d\eta \\
&\leq M_0^2\norm{(-A_1)^{\ga_1}\tilde{B}}^2 \norm{\tilde{C}}^2 \sup_{\xi>0} \; \xi \int_{-\infty}^\infty  \norm{R(\xi+i\eta,A_2) x}^2 d\eta <\infty
}
by Lemma~\ref{lem:unifbddconds}. Since $x\in X_2$ was arbitrary, this concludes that~\eqref{eq:RBCRint} is satisfied.

If $\gg/\ga_2\geq 1$, then for every $\gl\in \overline{\C^+}$
\eq{
\MoveEqLeft \norm{R(\gl,A_1)\tilde{B}} \norm{\tilde{C}R(\gl,A_2)x} 
= \norm{R(\gl,A_1)\tilde{B}} \norm{\tilde{C}(-A_2)^{\ga_2}(-A_2)^{-\ga_2}R(\gl,A_2)x} \\
&\leq \norm{R(\gl,A_1)\tilde{B}} \norm{\tilde{C}_{\ga_2}}\norm{R(\gl,A_2)(-A_2)^{-\ga_2}x } \\
&\leq M_0 \norm{\tilde{C}_{\ga_2}} \norm{x}\norm{R(\gl,A_1)\tilde{B}} 
}
where $\tilde{C}_{\ga_2}$ is the bounded extension of $\tilde{C}(-A_2)^{\ga_2}$ to $X_2$. Since $\norm{R(i\gw,A_1)}=\Omi(\abs{\gw}^\ga)$, this again implies $\norm{R(i\gw,A_1)\tilde{B}} \norm{\tilde{C}R(i\gw,A_2)} = \Omi(\abs{\gw}^\ga)$.  
If in addition $\dim Y_1<\infty$, we have
\eq{
\MoveEqLeft\sup_{\xi>0} \; \xi \int_{-\infty}^\infty \norm{R(\xi+i\eta,A_1)\tilde{B}}^2 \norm{\tilde{C}R(\xi+i\eta,A_2)x}^2 d\eta \\
&\leq M_0^2\norm{\tilde{C}_{\ga_2}}^2  \sup_{\xi>0} \; \xi \int_{-\infty}^\infty  \norm{R(\xi+i\eta,A_2) \tilde{B}}^2 d\eta <\infty
}
again by Lemma~\ref{lem:unifbddconds}. This concludes that~\eqref{eq:RBCRint} holds if $\gb=0$.

It remains to consider the case where $0<\gb<\ga_1$ and $0<\gg<\ga_2$ satisfy $\gb/\ga_1+\gg/\ga_2\geq  1$. We can choose $0<\gb_0\leq \gb$ and $0<\gg_0\leq \gg$ such that $\gb_0/\ga_1+\gg_0/\ga_2= 1$.
By the Moment Inequality~\cite[Prop. 6.6.2]{haasefuncalc} there exist $M_{{\gb_0}/\ga_1},M_{{\gg_0}/\ga_2}\geq 1$ such that
\eq{
\norm{(-A_1)^{-{\gb_0}} R } \leq M_{{\gb_0}/\ga_1} \norm{R}^{1-{\gb_0}/\ga_1}\norm{(-A_1)^{-\ga_1} R }^{{\gb_0}/\ga_1}\\
\norm{(-A_2)^{-{\gg_0}} Q } \leq M_{{\gg_0}/\ga_2} \norm{Q}^{1-{\gg_0}/\ga_2}\norm{(-A_2)^{-\ga_2} Q }^{{\gg_0}/\ga_2}
}
for any $R\in \Lin(Y,X_1)$ and $Q\in \Lin(Y,X_2)$.
Let $\gl\in \overline{\C^+}$ and for brevity denote $R_1=R(\gl,A_1)$ and $R_2=R(\gl,A_2)$. If  $\tilde{C}_{\gg_0}\in \Lin(X_2,Y)$ is the bounded extension of $\tilde{C}(-A_2)^{\gg_0}$ to $X_2$, and $\tilde{B}_{\gb_0}=(-A_1)^{\gb_0}\tilde{B} \in \Lin(Y,X_1)$, then for every $\gl\in \overline{\C^+}$
  \eq{
  \MoveEqLeft \norm{R(\gl,A_1)\tilde{B}} \norm{\tilde{C}R(\gl,A_2)x} 
  =\norm{  (-A_1)^{-{\gb_0}} R_1 (-A_1)^{\gb_0} \tilde{B}} \norm{\tilde{C} (-A_2)^{\gg_0} (-A_2)^{-{\gg_0}} R_2x} \\
&\leq\norm{  (-A_1)^{-{\gb_0}} R_1 \tilde{B}_{\gb_0}} \norm{\tilde{C}_{\gg_0}} \norm{ (-A_2)^{-{\gg_0}} R_2 x } \\
  &\leq M_{{\gb_0}/\ga_1} \norm{  R_1 \tilde{B}_{\gb_0}}^{1-{\gb_0}/\ga_1} \norm{ (-A_1)^{-\ga_1} R_1 \tilde{B}_{\gb_0}}^{{\gb_0}/\ga_1} \\
  &\qquad \times \norm{  \tilde{C}_{\gg_0}} M_{{\gg_0}/\ga_2} \norm{ R_2 x }^{1-{\gg_0}/\ga_2} \norm{(-A_2)^{-\ga_2} R_2 x }^{{\gg_0}/\ga_2}\\
&\leq M_{{\gb_0}/\ga_1} M_{{\gg_0}/\ga_2} M_0^2 \norm{  \tilde{B}_{\gb_0}}^{{\gb_0}/\ga_1} \norm{  \tilde{C}_{\gg_0}} \norm{x}^{\gg_0/\ga_2} \norm{  R(\gl,A_1) \tilde{B}_{\gb_0}}^{1-{\gb_0}/\ga_1}  \norm{ R(\gl,A_2)x}^{1-{\gg_0}/\ga_2} 
  } 
  which concludes $\norm{R(i\gw,A_1)\tilde{B}} \norm{\tilde{C}R(i\gw,A_2)} = \Omi(\abs{\gw}^\ga)$ since  $\norm{R(i\gw,A_1)}=\Omi(\abs{\gw}^\ga)$,  $\norm{R(i\gw,A_2)}=\Omi(\abs{\gw}^\ga)$, and $1-\gb_0/\ga_1+1-\gg_0/\ga_2 = 1$. 
 If in addition $\dim Y_1<\infty$,
 using the Hölder inequality for $p=1/(1-\gb_0/\ga_1)$ and $q=1/(1-\gg/\ga_2)$ (which satisfy $1/p+1/q=1-\gb_0/\ga_1+1-\gg_0/\ga_2=1$) and denoting $\tilde{M} = M_{{\gb_0}/\ga_1} M_{{\gg_0}/\ga_2} M_0^2 \norm{  \tilde{B}_{\gb_0}}^{{\gb_0}/\ga_1} \norm{  \tilde{C}_{\gg_0}} \norm{x}^{\gg_0/\ga_2}$, we have
\eq{
\MoveEqLeft[1] \sup_{\xi>0}\; \xi \int_{-\infty}^\infty \norm{R(\xi+i\eta,A_1)\tilde{B}}^2 \norm{\tilde{C}R(\xi+i\eta,A_2)x }^2 d\eta \\
&\leq \tilde{M}^2  \sup_{\xi>0}\; \xi \int_{-\infty}^\infty  \norm{R(\xi+i\eta,A_1)\tilde{B}_{\gb_0}}^{2(1-\gb_0/\ga_1)} \norm{R(\xi+i\eta,A_2)x}^{2(1-\gg_0/\ga_2)}  d\eta \\
&\leq \tilde{M}^2  \sup_{\xi>0}\; \left( \xi \int_{-\infty}^\infty  \norm{R(\xi+i\eta,A_1)\tilde{B}_{\gb_0}}^2 d\eta \right)^{\frac{1}{p}} \left( \xi \int_{-\infty}^\infty \norm{R(\xi+i\eta,A_2)x}^2  d\eta \right)^{\frac{1}{q}}\\
&\leq \tilde{M}^2  \left(  \sup_{\xi>0}\;\xi \int_{-\infty}^\infty  \norm{R(\xi+i\eta,A_1)\tilde{B}_{\gb_0}}^2 d\eta \right)^{\frac{1}{p}} \left( \sup_{\xi>0}\; \xi \int_{-\infty}^\infty \norm{R(\xi+i\eta,A_2)x}^2  d\eta \right)^{\frac{1}{q}}<\infty
}
by Lemma~\ref{lem:unifbddconds}. This concludes~\eqref{eq:RBCRint} is true if $\gb,\gg>0$, and thus concludes the proof.
\end{proof}

\begin{proof}[Proof of Theorem~\textup{\ref{thm:polpoltri}}]
  We have from Lemma~\ref{lem:trispec} that $\gs(A)\subset \C^-$. In order to prove that $T(t)$ is polynomially stable with $\ga = \max \set{\ga_1,\ga_2}$, we need to show that $T(t)$ is uniformly bounded and $\norm{R(i\gw,A)} = \Omi(\abs{\gw}^\ga)$.
Assume first that $\gb/\ga_1+\gg/\ga_2\geq 1$.
By Lemma~\ref{lem:trispec} the resolvent operator is of the form 
\eq{
  R(\gl,A) = \pmat{R(\gl,A_1) & R(\gl,A_1)BC R(\gl,A_2)\\0&R(\gl,A_2)} 
}
for every $\gl\in \overline{\C^+}$. We have $\norm{R(i\gw,A_1)} = \Omi(\abs{\gw}^\ga)$, and $\norm{R(i\gw,A_2)} = \Omi(\abs{\gw}^\ga)$ by assumption, and $\norm{R(i\gw,A_1)BC R(i\gw,A_2)} \leq \norm{R(i\gw,A_1)B} \norm{C R(i\gw,A_2)} = \Omi(\abs{\gw}^\ga)$ by Lemma~\ref{lem:BCintprops}. Together these properties conclude that $\norm{R(i\gw,A)}=\Omi(\abs{\gw}^\ga)$.

  If $\gb/\ga_1+\gg/\ga_2>1$, the uniform boundedness of $T(t)$ follows directly from Lemma~\ref{lem:triunifbdd}.

  It remains to show that if $\dim Y <\infty$, then the semigroup $T(t)$ is uniformly bounded whenever $\gb/\ga_1+\gg/\ga_2\geq 1$. Since we already showed that $\norm{R(i\gw,A)}=\Omi(\abs{\gw}^\ga)$, the polynomial stability will then follow from Lemma~\ref{lem:BorTomresolvent}.
For any $x = (x_1,x_2)\in X$ and $\gl\in\C^+$ we have
\eq{
\norm{R(\gl,A)x}^2
&= \norm{R(\gl,A_1)x_1 + R(\gl,A_1)BCR(\gl,A_2)x_2}^2 + \norm{R(\gl,A_2)x_2}^2\\
&\leq 2\norm{R(\gl,A_1)x_1}^2 + 2\norm{ R(\gl,A_1)B}^2\norm{CR(\gl,A_2)x_2}^2 + \norm{R(\gl,A_2)x_2}^2\\
\norm{R(\gl,A)^\ast x}^2
&= \norm{R(\gl,A_1)^\ast x_1}^2 + \norm{\left( R(\gl,A_1)BCR(\gl,A_2) \right)^\ast x_1 + R(\gl,A_2)^\ast x_2}^2\\
&\leq \norm{R(\gl,A_1)^\ast x_1}^2 + 2\norm{ R(\gl,A_2)^\ast C^\ast}^2\norm{B^\ast R(\gl,A_1)^\ast x_1}^2 + 2\norm{R(\gl,A_2)^\ast x_2}^2\\
&\leq \norm{R(\gl,A_1)^\ast x_1}^2 + 2\norm{ R(\overline{\gl},A_2^\ast) C^\ast}^2\norm{B^\ast R(\overline{\gl},A_1^\ast) x_1}^2 + 2\norm{R(\gl,A_2)^\ast x_2}^2.
}
Now
\eq{
\MoveEqLeft \sup_{\xi>0}\; \xi \int_{-\infty}^\infty \left( \norm{R(\xi+i\eta,A)x}^2 + \norm{R(\xi+i\eta,A)^\ast x}^2 \right) d\eta\\
&\leq 2 \sup_{\xi>0}\;\xi \int_{-\infty}^\infty \left( \norm{R(\xi+i\eta,A_1) x_1}^2 + \norm{R(\xi+i\eta,A_1)^\ast x_1}^2 \right) d\eta\\
&\qquad +2 \sup_{\xi>0}\;\xi \int_{-\infty}^\infty \left( \norm{R(\xi+i\eta,A_2) x_2}^2 + \norm{R(\xi+i\eta,A_2)^\ast x_2}^2 \right) d\eta\\
&\qquad + 2 \sup_{\xi>0}\; \xi \int_{-\infty}^\infty \norm{R(\xi+i\eta,A_1)B}^2 \norm{CR(\xi+i\eta,A_2)x_2}^2 d\eta \\
&\qquad + 2 \sup_{\xi>0}\; \xi \int_{-\infty}^\infty \norm{R(\xi-i\eta,A_2^\ast)C^\ast}^2 \norm{B^\ast R(\xi-i\eta,A_1^\ast) x_1}^2 d\eta<\infty,
}
where the first two supremums on the right hand side are finite by Lemma~\ref{lem:unifbddconds} since $T_1(t)$ and $T_2(t)$ are uniformly bounded. The third and the fourth supremums are finite by Lemma~\ref{lem:BCintprops} since $\dim Y<\infty$. Now Lemma~\ref{lem:unifbddconds} concludes that the semigroup $T(t)$ is uniformly bounded, and it is therefore polynomially stable with $\ga= \max \set{\ga_1,\ga_2}$.
\end{proof}

\begin{proof}[Proof of Theorem~\textup{\ref{thm:polpoltribanach}}]
  We have from Lemmas~\ref{lem:trispec} and~\ref{lem:triunifbdd} that $\gs(A)\subset \C^-$ and that the semigroup $T(t)$ is uniformly bounded. We therefore have from~\cite[Cor. 4.2]{ChiTom07} that $T(t)$ is strongly stable.

  Since the semigroups $T_1(t)$ and $T_2(t)$ are polynomially stable, we have from~\cite[Prop 1.3 \& Ex. 1.4]{BatDuyPolStab} that $\norm{R(i\gw,A_1)}=\Omi(\abs{\gw}^{\ga_1})$ and $\norm{R(i\gw,A_2)}=\Omi(\abs{\gw}^{\ga_2})$.
  If $\ga = \ga_1+\ga_2$, then
  \eq{
  \norm{R(i\gw,A_1)B}\norm{CR(i\gw,A_2)} 
  \leq \norm{R(i\gw,A_1)} \norm{B}\norm{C} \norm{R(i\gw,A_2)} 
  = \Omi(\abs{\gw}^\ga),
  }
  which together with Lemma~\ref{lem:trispec} further implies that $\norm{R(i\gw,A)}=\Omi(\abs{\gw}^\ga)$. We now have from~\cite[Thm. 1.5 \& Ex. 1.7]{BatDuyPolStab} that there exists $M\geq 1$ such that
  \eq{
  \norm{T(t)A\inv }\leq M \left( \frac{\ln t}{t} \right)^{1/\ga} 
  }
  for all $t>0$.
\end{proof}

\begin{proof}[Proof of Theorem~\textup{\ref{thm:polexptri}}] 
  Since by Definition~\ref{def:polstab} a polynomially stable semigroup is also strongly stable, we have from~\cite[Lem. 20]{hamalainenpohjolainen10} that the semigroup $T(t)$ is strongly stable. In particular this implies that $T(t)$ is uniformly bounded. By Lemma~\ref{lem:BorTomresolvent} it remains to show that $i\R\subset \rho(A)$ and $\norm{R(i\gw,A)}= \Omi(\abs{\gw}^\ga)$.

  Let $x=(x_1,x_2)^T\in X$ be such that $\norm{x}^2=\norm{x_1}^2 + \norm{x_2}^2=1$. For brevity denote $R_1= R(i\gw,A_1)$ and $R_2= R(i\gw,A_2)$. Now
  \eq{
  \norm{R(i\gw,A)x}^2 &= \Norm{\pmat{R_1 & R_1BC R_2\\0&R_2} \pmat{x_1\\x_2}}^2
  = \norm{R_1 x_1 + R_1BCR_2 x_2 }^2 + \norm{R_2 x_2}^2\\
  &\leq 2 \left( \norm{R_1}^2 \norm{x_1}^2 + \norm{R_1}^2 \norm{BC}^2 \norm{R_2}^2 \norm{x_2}^2 \right) + \norm{R_2}^2 \norm{x_2}^2\\
  &\leq 2 \left( \norm{x_1}^2 + \norm{x_2}^2 \right) \left( \norm{R_1}^2  + \norm{R_1}^2 \norm{BC}^2 \norm{R_2}^2  + \norm{R_2}^2 \right)\\
  &\leq 2 \max \Set{\norm{BC}^2,1} \left( \norm{R_1}^2 (1 +   \norm{R_2}^2)  + \norm{R_2}^2 \right)\\
  & \leq 2 (\norm{BC}^2 + 1) (\norm{R(i\gw,A_1)}^2 + 1) (\norm{R(i\gw,A_2)}^2 + 1) 
  }
  Due to the assumptions and Lemma~\ref{lem:BorTomresolvent} one of the norms $\norm{R(i\gw,A_1)}$ and $\norm{R(i\gw,A_2)}$ is of order $\Omi(\abs{\gw}^\ga)$, and the other is uniformly bounded. This together with the above estimate concludes that $\norm{R(i\gw,A)}=\Omi(\abs{\gw}^\ga)$.
\end{proof}

\section[Full Operator Matrices]{Semigroups Generated By Full Operator Matrices}
\label{sec:fullsys}

In this section we prove the results concerning the semigroup generated by the block operator matrix
\eq{
A= \pmat{A_1&B_1C_2\\B_2C_1&A_2}
}
where $A_1$ and $A_2$ generate polynomially stable semigroups. The operators $B_1\in \Lin(Y_1,X_1)$, $B_2\in \Lin(Y_2,X_2)$, $C_1\in \Lin(X_1,Y_2)$, and $C_2\in \Lin(X_2,Y_1)$ satisfy
\eq{
\ran(B_1)\subset \Dom( (-A_1)^{\gb_1}), \quad \ran(C_1^\ast ) \subset \Dom( (-A_1^\ast)^{\gg_1})\\
\ran(B_2)\subset \Dom( (-A_2)^{\gb_2}), \quad \ran(C_2^\ast ) \subset \Dom( (-A_2^\ast)^{\gg_2})
}
for some $\gb_1,\gb_2,\gg_1,\gg_2\geq 0$.

\begin{lemma} 
  \label{lem:fullspec}
  If $\gl \in \overline{\C^+}$ is such that $1\in\rho(C_2R(\gl,A_2)B_2C_1R(\gl,A_1)B_1)$, then $\gl\in \rho(A)$ and
  \eq{
  R(\gl,A) 
  &= \pmat{R(\gl,A_1)+R(\gl,A_1)B_1C_2S_1(\gl)\inv B_2C_1R(\gl,A_1) &R(\gl,A_1)B_1C_2S_1(\gl)\inv\\ S_1(\gl)\inv B_2C_1R(\gl,A_1)&S_1(\gl)\inv} 
  }
  where 
  \eq{ 
  S_1(\gl)\inv &= R(\gl,A_2) + R(\gl,A_2)B_2C_1R(\gl,A_1)B_1 D_\gl\inv C_2R(\gl,A_2),\\ 
  D_\gl &= I-C_2 R(\gl,A_2)B_2C_1R(\gl,A_1)B_1 .
  }
\end{lemma}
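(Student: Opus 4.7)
The plan is to derive the formula by the Schur complement method, solving the resolvent equation $(\lambda - A)\binom{x_1}{x_2} = \binom{y_1}{y_2}$ block by block. First I would observe that $\lambda \in \overline{\C^+}$ automatically lies in $\rho(A_1)\cap \rho(A_2)$: Definition~\ref{def:polstab} gives $i\R \subset \rho(A_j)$, and uniform boundedness of $T_1(t)$ and $T_2(t)$ places $\set{\re \lambda>0}$ in each resolvent set as well. With $R(\lambda,A_1)$ and $R(\lambda,A_2)$ available, the resolvent equation becomes the coupled pair
\eq{
(\lambda - A_1)x_1 = y_1 + B_1 C_2 x_2, \qquad (\lambda - A_2)x_2 = y_2 + B_2 C_1 x_1,
}
and eliminating $x_1 = R(\lambda,A_1)y_1 + R(\lambda,A_1)B_1 C_2 x_2$ from the second equation reduces matters to solving
\eq{
(I - R(\lambda,A_2)B_2 C_1 R(\lambda,A_1)B_1 C_2)\,x_2 = R(\lambda,A_2)\bigl(y_2 + B_2 C_1 R(\lambda,A_1)y_1\bigr)
}
in $X_2$.

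Next I would invert the operator on the left-hand side using a Sherman--Morrison--Woodbury style identity. Setting $U = R(\lambda,A_2)B_2 C_1 R(\lambda,A_1)B_1 \in \Lin(Y_1,X_2)$ and $V = C_2 \in \Lin(X_2,Y_1)$, the assumption is precisely that $I - VU = D_\lambda$ is invertible in $\Lin(Y_1)$. A direct algebraic verification shows that $(I-UV)\bigl(I + U D_\lambda\inv V\bigr) = I$ and similarly on the other side, so that
\eq{
(I - UV)\inv = I + U D_\lambda\inv V = I + R(\lambda,A_2)B_2 C_1 R(\lambda,A_1)B_1 D_\lambda\inv C_2.
}
Multiplying by $R(\lambda,A_2)$ on the right produces exactly $S_1(\lambda)\inv$. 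Substituting back I get $x_2 = S_1(\lambda)\inv y_2 + S_1(\lambda)\inv B_2 C_1 R(\lambda,A_1)y_1$ and then $x_1 = R(\lambda,A_1)y_1 + R(\lambda,A_1)B_1 C_2 x_2$, and collecting coefficients of $y_1,y_2$ yields precisely the four entries of the claimed block matrix for $R(\lambda,A)$.

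The verification that the resulting bounded operator on $X$ is a two-sided inverse of $\lambda - A$ on $\Dom(A)=\Dom(A_1)\times \Dom(A_2)$ is then a routine bookkeeping check: $(\lambda-A)$ times the candidate reproduces the identity because every step above was an equivalence, and the other direction is the analogous computation starting from $x\in \Dom(A)$. The only nontrivial step in the whole argument, and the main obstacle one needs to isolate, is the Woodbury identity that converts the hypothesis --- invertibility of $D_\lambda$ on the (potentially much smaller) space $Y_1$ --- into invertibility of the Schur complement operator $I - UV$ on the full state space $X_2$. Everything else is linear algebra on a $2\times 2$ block matrix together with the fact that the resolvents $R(\lambda,A_1)$ and $R(\lambda,A_2)$ are bounded for $\lambda \in \overline{\C^+}$.
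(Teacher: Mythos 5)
Your proposal is correct and follows essentially the same route as the paper: both arguments reduce the problem to inverting the Schur complement $S_1(\gl)=\gl-A_2-B_2C_1R(\gl,A_1)B_1C_2$ and use the Sherman--Morrison--Woodbury identity to transfer the invertibility of $D_\gl=I-C_2R(\gl,A_2)B_2C_1R(\gl,A_1)B_1$ on $Y_1$ into the bounded invertibility of $S_1(\gl)$ on $X_2$, your elimination of $x_1$ being just the resolvent-equation form of the paper's block LDU factorization. The only cosmetic difference is that you verify the Woodbury identity directly, whereas the paper cites it from the literature.
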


\begin{proof}
  Let $\gl\in \overline{\C^+}$ be such that $1\in\rho(C_2R(\gl,A_2)B_2C_1R(\gl,A_1)B_1)$ and denote $R_1=R(\gl,A_1)$ and $R_2=R(\gl,A_2)$.  
The Schur complement $S_1(\gl)$ of $\gl-A_1$ in 
  \eq{
  \gl-A=\pmat{\gl-A_1&-B_1C_2\\-B_2C_1&\gl-A_2} 
  }
  is
  \eq{
  S_1(\gl) 
  &= \gl-A_2 - B_2C_1R_1B_1C_2 .
  }
  Since $1\in\rho(C_2R_2B_2C_1R_1B_1)$,
  the Shermann--Morrison--Woodbury formula (see, e.g.,~\cite[Lem. 10]{Pau12}) implies that $S_1(\gl)$ is boundedly invertible and
  \eq{
  S_1(\gl)\inv = R_2 + R_2B_2C_1R_1B_1(I-C_2R_2B_2C_1R_1B_1)\inv C_2 R_2
  }
  Since the $\gl-A_1$ and its Schur complement $S_1(\gl)$ are boundedly invertible, we have that $\gl\in \rho(A)$ and the resolvent operator $R(\gl,A)$ is given by
  \eq{
 \MoveEqLeft R(\gl,A)
  = \pmat{I&R_1B_1C_2\\0&I} \pmat{R_1&0\\0&S_1(\gl)\inv}\pmat{I&0\\B_2C_1R_1&I}\\
  &= \pmat{R_1+R_1B_1C_2S_1(\gl)\inv B_2C_1R_1 &R_1B_1C_2S_1(\gl)\inv\\S_1(\gl)\inv B_2C_1R_1&S_1(\gl)\inv} .
  }
\end{proof}

\begin{lemma}
  \label{lem:RBCRintmix}
  Under the assumptions of Theorem~\textup{\ref{thm:polpolfull}}, we have $\norm{R(i\gw,A_k)B_k}\norm{C_l R(i\gw,A_l)} = \Omi(\abs{\gw}^\ga)$ with $\ga = \max \set{\ga_1,\ga_2}$, and
\begin{subequations}
  \label{eq:RBCRintmix}
  \eqn{
  \label{eq:RBCRintmix1}
  &\sup_{\xi>0}\; \xi \int_{-\infty}^\infty \norm{R(\xi+i\eta,A_k)B_k}^2 \norm{C_l R(\xi+i\eta,A_l)x_l}^2 d\eta <\infty\\
  \label{eq:RBCRintmix2}
  &\sup_{\xi>0}\; \xi \int_{-\infty}^\infty \norm{R(\xi+i\eta,A_k)^\ast C_k^\ast}^2 \norm{B_l^\ast R(\xi+i\eta,A_l)^\ast x_l}^2 d\eta <\infty
  } 
\end{subequations}
for every $k,l \in\set{1,2}$.
\end{lemma}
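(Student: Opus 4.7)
The plan is to derive Lemma~\ref{lem:RBCRintmix} by a direct application of Lemma~\ref{lem:BCintprops} to each pair $(k,l)\in\set{1,2}^2$. For the resolvent decay bound and the first integral~\eqref{eq:RBCRintmix1} I would invoke Lemma~\ref{lem:BCintprops} with $(A_1,A_2,\tilde{B},\tilde{C},Y_1)$ set to $(A_k,A_l,B_k,C_l,Y_k)$. Hypothesis~\eqref{eq:BCrancondsfull} supplies the range conditions $\ran(B_k)\subset \Dom((-A_k)^{\gb_k})$ and $\ran(C_l^\ast)\subset \Dom((-A_l^\ast)^{\gg_l})$, and both $T_k(t)$ and $T_l(t)$ are polynomially stable, so what remains is to verify that $\gb_k/\ga_k+\gg_l/\ga_l\geq 1$ (for the resolvent estimate) and that either $\gb_k\geq \ga_k$ or $\dim Y_k<\infty$ together with the same exponent condition (for the integral).

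I would then inspect each of the structural hypotheses (i)--(iv) of Theorem~\ref{thm:polpolfull} in turn and check that these abstract requirements are met for all four index pairs. In case (i), the bounds $\gb_k\geq \ga_k$ and $\gg_l\geq \ga_l$ are directly available, so both requirements in Lemma~\ref{lem:BCintprops} hold automatically. Case (ii) uses $\dim Y_1<\infty$ together with $\gb_1+\gg_1\geq \ga_1$ when $k=1$, and the stronger inequalities $\gb_2,\gg_2\geq \ga_2$ when $k=2$; in each sub-case the sum $\gb_k/\ga_k+\gg_l/\ga_l$ is at least $1$ because either $\gg_l/\ga_l\geq 1$ (when $l=2$) or $\gb_1/\ga_1+\gg_1/\ga_1\geq 1$ (when $l=1$ and $k=1$). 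Case (iii) is analogous with the indices swapped, and case (iv) supplies both the exponent condition and the finite-dimensionality of $Y_k$ directly.

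For the second integral~\eqref{eq:RBCRintmix2} I would pass to adjoints. On a Hilbert space, $A_k^\ast$ generates a polynomially stable semigroup with the same exponent $\ga_k$: the adjoint semigroup is strongly continuous and uniformly bounded, and by Lemma~\ref{lem:BorTomresolvent}(b) the identity $\norm{R(i\gw,A_k^\ast)}=\norm{R(-i\gw,A_k)}$ preserves the resolvent growth. Applying Lemma~\ref{lem:BCintprops} to the tuple $(A_k^\ast,A_l^\ast,C_k^\ast,B_l^\ast,Y_k)$ reduces~\eqref{eq:RBCRintmix2} to the conditions $\gg_k/\ga_k+\gb_l/\ga_l\geq 1$ and either $\gg_k\geq \ga_k$ or $\dim Y_k<\infty$ with the same exponent condition. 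These are precisely the $\gb\leftrightarrow \gg$ transposes of the conditions verified in the primal case; since each of (i)--(iv) is invariant under this exchange of exponents, the verification is verbatim the same.

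The chief obstacle is the bookkeeping of the four index pairs $(k,l)$ against the four structural hypotheses of Theorem~\ref{thm:polpolfull}, rather than any analytic difficulty, since Lemma~\ref{lem:BCintprops} already packages all the essential analysis and the remainder reduces to elementary inequalities between the exponents.
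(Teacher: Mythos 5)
Your proposal is correct and follows essentially the same route as the paper: invoke Lemma~\ref{lem:BCintprops} for each pair $(k,l)$, check the exponent conditions case by case against hypotheses (i)--(iv), and handle~\eqref{eq:RBCRintmix2} by applying the same lemma to the adjoint operators. Your observation that the adjoint case is the $\gb\leftrightarrow\gg$ transpose and that (i)--(iv) are invariant under this exchange is a slightly cleaner way of packaging the second half of the verification, which the paper leaves as ``similarly as above.''
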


\begin{proof}
  The property $\norm{R(i\gw,A_k)B_k}\norm{C_l R(i\gw,A_l)} = \Omi(\abs{\gw}^\ga)$ follows from Lemma~\ref{lem:BCintprops} since in each of the situations (i)-(iv) the exponents satisfy $\gb_k/\ga_k + \gg_l/\ga_l\geq 1$.

  We have from Lemma~\ref{lem:BCintprops} that for fixed $k,l\in \set{1,2}$ the condition~\eqref{eq:RBCRintmix1} is satisfied if either
  \begin{itemize}
    \item[(a)] $\gb_k\geq \ga_k$, or 
    \item[(b)] $\dim Y_k<\infty$ and $\gb_k/\ga_k+\gg_l/\ga_l\geq 1$.  
  \end{itemize}
It is therefore sufficient to verify that in each of the situations (i)-(iv), for every $k,l\in \set{1,2}$ either (a) or (b) is satisfied.

In the following we list the possible situations with respect to the assumptions (i)-(iv), and the possible combinations of $(k,l)$

\textbf{Assumption (i):} (a) is satisfied for $k=1,2$ (and consequently, for every $(k,l) \in\set{1,2}\times \set{1,2}$).  

 \textbf{Assumption (ii):}
    For $(k,l)$
    \begin{itemize} 
      \item[$(1,1)$] (b) is satisfied since $\dim Y_1<\infty$ and $\gb_1/\ga_1+\gg_1/\ga_1\geq 1$
      \item[$(1,2)$] (b) is satisfied since $\dim Y_1<\infty$ and $\gb_1/\ga_1+\gg_2/\ga_2\geq \gb_1/\ga_1+1\geq 1$
      \item[$(2,l)$] (a) is safisfied since $\gb_2\geq \ga_2$
    \end{itemize}

 \textbf{Assumption (iii):}
    For $(k,l)$
    \begin{itemize} 
      \item[$(1,l)$] (a) is satisfied since $\gb_1\geq \ga_1$.
      \item[$(2,1)$] (b) is satisfied since $\dim Y_2<\infty$ and $\gb_2/\ga_2+\gg_1/\ga_1\geq \gb_2/\ga_2+1\geq 1$
      \item[$(2,2)$] (b) is satisfied since $\dim Y_2<\infty$ and $\gb_2/\ga_2+\gg_2/\ga_2\geq 1$
    \end{itemize}

 \textbf{Assumption (iv):} (b) is satisfied for for every $(k,l) \in\set{1,2}\times \set{1,2}$ since $\dim Y_k<\infty$ and $\gb_k/\ga_k + \gg_l/\ga_l\geq 1$ by assumption.

To show~\eqref{eq:RBCRintmix2}, we note that
\eq{
\MoveEqLeft\sup_{\xi>0}\; \xi \int_{-\infty}^\infty \norm{R(\xi+i\eta,A_k)^\ast C_k^\ast}^2 \norm{B_l^\ast R(\xi+i\eta,A_l)^\ast x_l}^2 d\eta \\
&= \sup_{\xi>0}\; \xi \int_{-\infty}^\infty \norm{R(\xi-i\eta,A_k^\ast)C_k^\ast}^2 \norm{B_l^\ast R(\xi-i\eta,A_l^\ast)x_l}^2 d\eta \\
&= \sup_{\xi>0}\; \xi \int_{-\infty}^\infty \norm{R(\xi+i\eta,A_k^\ast)C_k^\ast}^2 \norm{B_l^\ast R(\xi+i\eta,A_l^\ast)x_l}^2 d\eta .
}
We can apply Lemma~\ref{lem:BCintprops} to operators $A^\ast$, $C_k^\ast$ and $B_k^\ast$ for $k,l\in\set{1,2}$, and see that
for fixed $k,l\in \set{1,2}$ the condition~\eqref{eq:RBCRintmix2} is satisfied if either
  \begin{itemize}
    \item[(a$'$)] $\gg_k\geq \ga_k$, or 
    \item[(b$'$)] $\dim Y_k<\infty$ and $\gb_l/\ga_l+\gg_k/\ga_k\geq 1$.  
  \end{itemize}
  Similarly as above, it can be verified that in every situation (i)-(iv) either (a$'$) or (b$'$) is satisfied.
\end{proof}

We can now conclude this section by presenting the proofs of Theorems~\ref{thm:polpolfull} and~\ref{thm:polexpfull}.

\begin{proof}[Proof of Theorem~\textup{\ref{thm:polpolfull}}]
  By Lemma~\ref{lem:CRBbdd} we can choose $M_1,M_2\geq 1$ such that
  \begin{subequations}
    \label{eq:polpolfCRBbnds}
    \eqn{
    \norm{C_1R(\gl,A_1)B_1} &\leq M_1 \norm{(-A_1)^{\gb_1}B_1} \norm{(-A_1^\ast)^{\gg_1}C_1^\ast}\\
    \norm{C_2R(\gl,A_2)B_2} &\leq M_2 \norm{(-A_2)^{\gb_2}B_2} \norm{(-A_2^\ast)^{\gg_2}C_2^\ast} 
    }
  \end{subequations}
  for all $\gl\in \overline{\C^+}$. If we choose $0<\gd<1/(M_1M_2)$ and if

  \eq{
  \norm{(-A_1)^{\gb_1}B_1} \norm{(-A_1^\ast)^{\gg_1}C_1^\ast} \norm{(-A_2)^{\gb_2}B_2} \norm{(-A_2^\ast)^{\gg_2}C_2^\ast}<\gd,
  }
  then
\eq{
\MoveEqLeft\norm{ C_1R(\gl,A_1)B_1 C_2R(\gl,A_2)B_2}
\leq\norm{ C_1R(\gl,A_1)B_1} \norm{ C_2R(\gl,A_2)B_2}
\leq \gd M_1M_2<1.
}
In particular, this implies that $1\in \rho(C_1R(\gl,A_1)B_1 C_2R(\gl,A_2)B_2)$ and
\eq{
\norm{ D_\gl\inv} 
&=\norm{ \left( I- C_1R(\gl,A_1)B_1 C_2R(\gl,A_2)B_2 \right)\inv} 
=\bigl\|  \sum_{n=0}^\infty \left(C_1R(\gl,A_1)B_1 C_2R(\gl,A_2)B_2 \right)^n\bigr\|\\
 &\leq\sum_{n=0}^\infty \norm{ C_1R(\gl,A_1)B_1 C_2R(\gl,A_2)B_2 }^n 
 \leq \sum_{n=0}^\infty \; ( \gd M_1M_2 )^n 
 = \frac{1}{1-\gd M_1M_2}<\infty
}
for all $\gl\in \overline{\C^+}$. Lemma~\ref{lem:fullspec} now concludes that $\overline{\C^+}\subset \rho(A)$ and gives a formula for the resolvent $R(\gl,A)$ for $\gl\in \overline{\C^+}$.

To prove uniform boundedness of $T(t)$ using Lemma~\ref{lem:unifbddconds}, we need to estimate norms $\norm{R(\gl,A)x}$ and $\norm{R(\gl,A)^\ast x}$ for $x = (x_1,x_2)^T\in X$ 
and $\gl\in \overline{\C^+}$.
Let $\gl\in \overline{\C^+}$ and denote
$R_1=R(\gl,A_1)$, $R_2=R(\gl,A_2)$, and $D_\gl= I- C_1R(\gl,A_1)B_1 C_2R(\gl,A_2)B_2$.
If $M_D = 1/(1-\gd M_1M_2)$, we saw that $\norm{D_\gl\inv}\leq M_D$. 
If we choose $\tilde{M}_1 = M_1 \norm{(-A_1)^{\gb_1}B_1} \norm{(-A_1^\ast)^{\gg_1}C_1^\ast}$ and $\tilde{M}_2 =  M_2 \norm{(-A_2)^{\gb_2}B_2} \norm{(-A_2^\ast)^{\gg_2}C_2^\ast} $, then~\eqref{eq:polpolfCRBbnds} imply
  \eq{
  \norm{C_1R(\gl,A_1)B_1} \leq \tilde{M}_1 
  \qquad \mbox{and} \qquad
\norm{C_2R(\gl,A_2)B_2}&\leq \tilde{M}_2.
  } 
  In the estimates we use the scalar inequalities $(a+b)^2\leq 2(a^2+b^2)$ and $(a+b+c)^2\leq 3(a^2+b^2+c^2) $ for $a,b,c\geq 0$.
We have
\eq{
\MoveEqLeft \norm{R(\gl,A) x}^2
=  \Norm{\pmat{R_1x_1+R_1B_1C_2S_1(\gl)\inv B_2C_1R_1 x_1 +R_1B_1C_2S_1(\gl)\inv x_2\\S_1(\gl)\inv B_2C_1R_1x_1+S_1(\gl)\inv x_2} }^2\\
&=  \norm{R_1x_1+R_1B_1C_2S_1(\gl)\inv B_2C_1R_1 x_1 +R_1B_1C_2S_1(\gl)\inv x_2}^2 \\
&\qquad+ \norm{S_1(\gl)\inv B_2C_1R_1x_1+S_1(\gl)\inv x_2}^2\\
&\leq 3 \norm{R_1x_1}^2+ 3\norm{R_1B_1C_2S_1(\gl)\inv B_2C_1R_1 x_1}^2 + 3\norm{R_1B_1C_2S_1(\gl)\inv x_2}^2 \\
&\qquad+ 2\norm{S_1(\gl)\inv B_2C_1R_1x_1}^2 + 2\norm{S_1(\gl)\inv x_2}^2,
}
where the terms on the right-hand side can be further estimated by
\eq{
\MoveEqLeft \norm{R_1B_1C_2S_1(\gl)\inv B_2C_1R_1 x_1}
=\norm{R_1B_1C_2(R_2 + R_2B_2C_1R_1B_1D_\gl\inv C_2R_2)B_2C_1R_1 x_1}\\
&\leq \norm{R_1B_1} \left( \norm{C_2 R_2B_2}  + \norm{C_2R_2B_2} \norm{C_1R_1B_1} \norm{D_\gl\inv} \norm{ C_2R_2 B_2}  \right)\norm{C_1R_1 x_1}\\
&\leq \left( \tilde{M}_2  + \tilde{M}_1 \tilde{M}_2^2 M_D  \right) \norm{R_1B_1} \norm{C_1R_1x_1}
} 
\eq{
\MoveEqLeft \norm{R_1B_1C_2S_1(\gl)\inv x_2}
= \norm{R_1B_1C_2 (R_2 + R_2B_2C_1R_1B_1D_\gl\inv C_2R_2) x_2} \\
& \leq \norm{R_1B_1} \norm{C_2 R_2  x_2} + \norm{R_1B_1} \norm{C_2  R_2B_2 } \norm{C_1R_1B_1} \norm{D_\gl\inv} \norm{ C_2R_2 x_2} \\
& \leq \left( 1+ \tilde{M}_1 \tilde{M}_2 M_D \right) \norm{R_1B_1} \norm{C_2 R_2 x_2}
} 
\eq{
\MoveEqLeft \norm{S_1(\gl)\inv B_2C_1R_1x_1}
=\norm{(R_2 + R_2B_2C_1R_1B_1D_\gl\inv C_2R_2) B_2C_1R_1x_1}\\
&\leq \norm{R_2  B_2} \norm{C_1R_1x_1} +\norm{ R_2B_2} \norm{C_1R_1B_1} \norm{D_\gl\inv} \norm{ C_2R_2 B_2 } \norm{C_1R_1x_1}\\
&\leq (1+ \tilde{M}_1 \tilde{M}_2 M_D)  \norm{R_2  B_2} \norm{C_1R_1 x_1} 
} 
\eq{
\MoveEqLeft \norm{S_1(\gl)\inv x_2}
= \norm{R_2 x_2 + R_2B_2C_1R_1B_1D_\gl\inv C_2R_2 x_2}\\
&\leq \norm{R_2 x_2 } + \norm{R_2B_2} \norm{C_1R_1B_1} \norm{D_\gl\inv} \norm{ C_2R_2 x_2}\\
&\leq \norm{R_2 x_2 } + \tilde{M}_1 M_D  \norm{R_2B_2}  \norm{ C_2R_2 x_2}
}
Denote $M_{\text{tot}} = \max \set{\tilde{M}_1 M_D, 1+\tilde{M}_1 \tilde{M}_2 M_D, \tilde{M}_2+\tilde{M}_1 \tilde{M}_2^2 M_D}$. Combining the above estimates 
yields 
\eq{
\norm{R(\gl,A)x}^2
&\leq 3 \norm{R_1x_1}^2+ 3\norm{R_1B_1C_2S_1(\gl)\inv B_2C_1R_1 x_1}^2 + 3\norm{R_1B_1C_2S_1(\gl)\inv x_2}^2 \\
&\qquad+ 2\norm{S_1(\gl)\inv B_2C_1R_1x_1}^2 + 2\norm{S_1(\gl)\inv x_2}^2\\
& \leq 3 \norm{R_1x_1}^2+ 3 M_{\text{tot}}^2  \norm{R_1B_1}^2 \norm{C_1R_1 x_1}^2 
+ 3M_{\text{tot}}^2  \norm{R_1B_1}^2 \norm{C_2R_2 x_2}^2\\
&\qquad + 2M_{\text{tot}}^2  \norm{R_2B_2}^2 \norm{C_1R_1 x_1}^2
+ 4\norm{R_2 x_2 }^2 + 4 M_{\text{tot}}^2 \norm{R_2B_2}^2  \norm{ C_2R_2 x_2}^2\\
& \leq
3 \norm{R_1x_1}^2+ 4\norm{R_2 x_2 }^2 + 
4 M_{\text{tot}}^2  \sum_{k,l=1,2} \norm{R_kB_k}^2 \norm{C_lR_lx_l}^2 .
}
Lemmas~\ref{lem:unifbddconds} and~\ref{lem:RBCRintmix} thus conclude
\eq{
\MoveEqLeft \sup_{\xi>0}\; \xi \int_{-\infty}^\infty \norm{R(\xi+i\eta,A)x}^2 d\eta 
\leq 3\sup_{\xi>0}\; \xi \int_{-\infty}^\infty \norm{R(\xi+i\eta,A_1)x_1}^2 d\eta \\
& \qquad+4 \sup_{\xi>0}\; \xi \int_{-\infty}^\infty \norm{R(\xi+i\eta,A_2)x_2}^2 d\eta \\
&\qquad +4 M_{\text{tot}}^2 \sum_{k,l=1,2} \sup_{\xi>0}\; \xi \int_{-\infty}^\infty \norm{R(\xi+i\eta,A_k)B_k}^2 \norm{C_lR(\xi+i\eta,A_l)x_l}^2 d\eta
<\infty.
}
Furthermore, the same estimates also show that
\eq{
\norm{R(i\gw,A)}^2
&\leq 3 \norm{R(i\gw,A_1)}^2 + 4\norm{R(i\gw,A_2)}^2  \\
& \quad + 4 M_{\text{tot}}^2  \sum_{k,l=1,2} \norm{R(i\gw,A_k)B_k}^2 \norm{C_lR(i\gw,A_l)}^2 .
}
This immediately implies $\norm{R(i\gw,A)}= \Omi(\abs{\gw}^\ga)$ with $\ga = \max \set{\ga_1,\ga_2}$, since we have $\norm{R(i\gw,A_k)B_k} \norm{C_lR(i\gw,A_l)}  =\Omi(\abs{\gw}^\ga)$ by Lemma~\ref{lem:RBCRintmix}.

Similarly, we can estimate the norm of $R(\gl,A)^\ast x$ by
\eq{
\MoveEqLeft\norm{R(\gl,A)^\ast x}^2= 
\Norm{\pmat{R_1^\ast x_1+\left( R_1B_1C_2S_1(\gl)\inv B_2C_1R_1 \right)^\ast x_1 + \left( S_1(\gl)\inv B_2C_1R_1 \right)^\ast x_2 \\\left( R_1B_1C_2S_1(\gl)\inv \right)^\ast x_1 +\left( S_1(\gl)\inv \right)^\ast x_2}}^2\\
&=\norm{R_1^\ast x_1+( R_1B_1C_2S_1(\gl)\inv B_2C_1R_1 )^\ast x_1 + ( S_1(\gl)\inv B_2C_1R_1 )^\ast x_2 }^2 \\
&\qquad + \norm{( R_1B_1C_2S_1(\gl)\inv )^\ast x_1 +( S_1(\gl)\inv )^\ast x_2}^2\\
&\leq 3\norm{R_1^\ast x_1}^2+ 3\norm{( R_1B_1C_2S_1(\gl)\inv B_2C_1R_1 )^\ast x_1}^2  + 3\norm{( S_1(\gl)\inv B_2C_1R_1 )^\ast x_2 }^2 \\
&\qquad + 2\norm{( R_1B_1C_2S_1(\gl)\inv )^\ast x_1}^2 + 2\norm{( S_1(\gl)\inv )^\ast x_2}^2.
} 
Since
\eq{
(S(\gl)\inv)^\ast  
= R_2^\ast + R_2^\ast C_2^\ast (C_1R_1B_1)^\ast (D_\gl\inv)^\ast B_2^\ast R_2^\ast,
}
and $ \norm{B_2^\ast R_2^\ast C_2^\ast} =  \norm{(C_2R_2B_2)^\ast} =\norm{C_2R_2B_2} $,
we have 
\eq{
\MoveEqLeft\norm{( R_1B_1C_2S_1(\gl)\inv B_2C_1R_1 )^\ast x_1}
=\norm{ R_1^\ast C_1^\ast B_2^\ast (S_1(\gl)\inv)^\ast C_2^\ast B_1^\ast R_1^\ast x_1}\\
&=\norm{ R_1^\ast C_1^\ast B_2^\ast (R_2^\ast + R_2^\ast C_2^\ast (C_1R_1B_1)^\ast (D_\gl\inv)^\ast B_2^\ast R_2^\ast) C_2^\ast B_1^\ast R_1^\ast x_1}\\
&\leq  \norm{ R_1^\ast C_1^\ast} \left( \norm{C_2R_2B_2} + \norm{C_2R_2B_2} \norm{C_1R_1B_1} \norm{D_\gl\inv}\norm{C_2R_2B_2} \right) \norm{B_1^\ast R_1^\ast x_1}\\
&\leq (\tilde{M}_2 + \tilde{M}_1 \tilde{M}_2 M_D) \norm{ R_1^\ast C_1^\ast}  \norm{B_1^\ast R_1^\ast x_1}
} 
\eq{
\MoveEqLeft\norm{( S_1(\gl)\inv B_2C_1R_1 )^\ast x_2 }
=\norm{ R_1^\ast C_1^\ast B_2^\ast (R_2^\ast + R_2^\ast C_2^\ast (C_1R_1B_1)^\ast (D_\gl\inv)^\ast B_2^\ast R_2^\ast)  x_2}\\
&\leq \norm{ R_1^\ast C_1^\ast} \norm{ B_2^\ast R_2^\ast x_2} + \norm{ R_1^\ast C_1^\ast} \norm{ C_2R_2B_2 } \norm{C_1R_1B_1} \norm{D_\gl\inv} \norm{B_2^\ast R_2^\ast  x_2}\\
&\leq (1+ \tilde{M}_1 \tilde{M}_2 M_D) \norm{ R_1^\ast C_1^\ast} \norm{ B_2^\ast R_2^\ast x_2} 
} 
\eq{
\MoveEqLeft \norm{( R_1B_1C_2S_1(\gl)\inv )^\ast x_1 }
=\norm{ (R_2^\ast + R_2^\ast C_2^\ast (C_1R_1B_1)^\ast (D_\gl\inv)^\ast B_2^\ast R_2^\ast) C_2^\ast B_1^\ast R_1^\ast x_1}\\
&\leq\norm{ R_2^\ast C_2^\ast} \norm{ B_1^\ast R_1^\ast x_1} + \norm{R_2^\ast C_2^\ast} \norm{C_1R_1B_1} \norm{D_\gl\inv} \norm{C_2R_2B_2} \norm{B_1^\ast R_1^\ast x_1}\\
&\leq (1+ \tilde{M}_1 \tilde{M}_2 M_D)\norm{ R_2^\ast C_2^\ast} \norm{ B_1^\ast R_1^\ast x_1} 
}
\eq{
\MoveEqLeft\norm{( S_1(\gl)\inv )^\ast x_2 }
=\norm{ R_2^\ast x_2 + R_2^\ast C_2^\ast (C_1R_1B_1)^\ast (D_\gl\inv)^\ast B_2^\ast R_2^\ast  x_2}\\
&\leq\norm{ R_2^\ast x_2} + \norm{R_2^\ast C_2^\ast} \norm{C_1R_1B_1} \norm{D_\gl\inv} \norm{B_2^\ast R_2^\ast  x_2}\\
&\leq\norm{ R_2^\ast x_2} + \tilde{M}_1M_D \norm{R_2^\ast C_2^\ast} \norm{B_2^\ast R_2^\ast  x_2}
}
Similarly as in the case of $\norm{R(\gl,A)x}$, we therefore have
\eq{
\norm{R(\gl,A)^\ast x}^2
&\leq 3\norm{R_1^\ast x_1}^2+ 3\norm{( R_1B_1C_2S_1(\gl)\inv B_2C_1R_1 )^\ast x_1}^2  + 3\norm{( S_1(\gl)\inv B_2C_1R_1 )^\ast x_2 }^2 \\
&\qquad + 2\norm{( R_1B_1C_2S_1(\gl)\inv )^\ast x_1}^2 + 2\norm{( S_1(\gl)\inv )^\ast x_2}^2\\
& \leq 3 \norm{R_1^\ast x_1}^2+ 3 M_{\text{tot}}^2  \norm{R_1^\ast C_1^\ast }^2\norm{B_1^\ast R_1^\ast x_1}^2  
+ 3M_{\text{tot}}^2   \norm{R_1^\ast C_1^\ast }^2 \norm{B_2^\ast R_2^\ast x_2}^2\\
&\qquad + 2M_{\text{tot}}^2  \norm{R_2^\ast C_2^\ast}^2\norm{B_1^\ast R_1^\ast x_1}^2 
+ 4\norm{R_2^\ast x_2 }^2 + 4 M_{\text{tot}}^2 \norm{ R_2^\ast C_2^\ast}^2  \norm{ B_2^\ast R_2^\ast x_2}^2\\
&\leq 3 \norm{R_1^\ast x_1}^2+4 \norm{R_2^\ast x_2}^2+ 4 M_{\text{tot}}^2 \sum_{k,l=1,2}  \norm{ R_k^\ast C_k^\ast}^2  \norm{ B_l^\ast R_l^\ast x_l}^2.
}
Lemmas~\ref{lem:unifbddconds} and~\ref{lem:RBCRintmix} now imply
\eq{
\MoveEqLeft \sup_{\xi>0}\; \xi \int_{-\infty}^\infty \norm{R(\xi+i\eta,A)^\ast x}^2 d\eta 
\leq 3\sup_{\xi>0}\; \xi \int_{-\infty}^\infty \norm{R(\xi+i\eta,A_1)^\ast x_1}^2 d\eta \\
& \qquad+4 \sup_{\xi>0}\; \xi \int_{-\infty}^\infty \norm{R(\xi+i\eta,A_2)^\ast x_2}^2 d\eta \\
&\qquad +4 M_{\text{tot}}^2 \sum_{k,l=1,2} \sup_{\xi>0}\; \xi \int_{-\infty}^\infty \norm{R(\xi+i\eta,A_k)^\ast C_k^\ast}^2 \norm{B_l^\ast R(\xi+i\eta,A_l)^\ast x_l}^2 d\eta
<\infty.
}
By Lemma~\ref{lem:unifbddconds} we finally have that the semigroup $T(t)$ is uniformly bounded. This concludes that $T(t)$ is polynomially stable with $\ga=\max \set{\ga_1,\ga_2}$.
\end{proof}

\begin{proof}[Proof of Theorem~\textup{\ref{thm:polexpfull}}]
  Since $T_1(t)$ is exponentially stable, we have $\sup_{\gl\in \overline{\C^+}}\norm{R(\gl,A_1)}<\infty$.
  Because of this and by Lemma~\ref{lem:CRBbdd} we can choose $M_1,M_2\geq 1$ such that
  \eq{
  \norm{C_1R(\gl,A_1)B_1} &\leq M_1 \norm{B_1} \norm{C_1}\\
  \norm{C_2R(\gl,A_2)B_2} &\leq M_2 \norm{(-A_2)^{\gb_2}B_2} \norm{(-A_2^\ast)^{\gg_2}C_2^\ast} 
  }
  for all $\gl\in \overline{\C^+}$. If we choose $0<\gd<1/(M_1M_2)$ and if 
  \eq{
  \norm{B_1} \norm{C_1^\ast} \norm{(-A_2)^{\gb_2}B_2} \norm{(-A_2^\ast)^{\gg_2}C_2^\ast}<\gd,
  }
  then
\eq{
\MoveEqLeft\norm{ C_1R(\gl,A_1)B_1 C_2R(\gl,A_2)B_2}
\leq\norm{ C_1R(\gl,A_1)B_1} \norm{ C_2R(\gl,A_2)B_2}
\leq \gd M_1M_2<1.
}
As in the proof of Theorem~\ref{thm:polpolfull} we can now see that $\gs(A)\subset \C^-$, and $\norm{D_\gl\inv}$ is uniformly bounded for $\gl\in \overline{\C^+}$.

If we can verify that under the assumptions of the theorem the conditions~\eqref{eq:RBCRintmix} are satisfied for every $k,l\in \set{1,2}$, then the uniform boundedness of $T(t)$ follows directly from the estimates made in the proof of Theorem~\ref{thm:polpolfull}.

If $k = 1$ and $l=1,2$, we have that 
\eq{
\MoveEqLeft \sup_{\xi>0}\; \xi \int_{-\infty}^\infty \norm{R(\xi+i\eta,A_1)B_1}^2 \norm{C_l R(\xi+i\eta,A_l)x_l}^2 d\eta\\
&\leq \norm{B_1}^2 \norm{C_l}^2 \Bigl(\sup_{\gl\in  \overline{\C^+}} \norm{R(\gl,A)} \Bigr)^2 \sup_{\xi>0}\; \xi \int_{-\infty}^\infty  \norm{ R(\xi+i\eta,A_l)x_l}^2 d\eta <\infty
}
by Lemma~\ref{lem:unifbddconds}.
On the other hand, if $k=l=2$, then~\eqref{eq:RBCRintmix1} follows directly from Lemma~\ref{lem:BCintprops} and our assumptions. Finally, we need to consider the case where $k=2$ and $l=1$. If $\gb_2\geq \ga_2$, then $\norm{R(\gl,A_2)B_2}\leq \norm{R(\gl,A_2)(-A_2)^{-\ga_2}}\norm{(-A_2)^{\ga_2-\gb_2}} \norm{(-A_2)^{\gb_2}B_2}$ for all $\gl\in \overline{\C^+}$ and
\eq{
\MoveEqLeft[1] \sup_{\xi>0}\; \xi \int_{-\infty}^\infty \norm{R(\xi+i\eta,A_2)B_2}^2 \norm{C_1 R(\xi+i\eta,A_1)x_1}^2 d\eta\\
&\leq \norm{(-A_2)^{\gb_2}B_2}^2 \norm{(-A_2)^{\ga_2-\gb_2}}^2 \norm{C_1}^2 \Bigl(\sup_{\gl\in  \overline{\C^+}} \norm{R(\gl,A_2)(-A_2)^{-\ga_2}} \Bigr)^2 \\
&\quad\times \sup_{\xi>0}\; \xi \int_{-\infty}^\infty  \norm{ R(\xi+i\eta,A_1)x_1}^2 d\eta <\infty 
}
by Lemma~\ref{lem:unifbddconds}.
On the other hand, if $\dim Y_2<\infty$, then
\eq{
\MoveEqLeft[1] \sup_{\xi>0}\; \xi \int_{-\infty}^\infty \norm{R(\xi+i\eta,A_2)B_2}^2 \norm{C_1 R(\xi+i\eta,A_1)x_1}^2 d\eta\\
&\leq  \norm{C_1}^2 \norm{x_1}^2 \Bigl(\sup_{\gl\in  \overline{\C^+}} \norm{R(\gl,A_1)} \Bigr)^2 \sup_{\xi>0}\; \xi \int_{-\infty}^\infty  \norm{ R(\xi+i\eta,A_2)B_2}^2 d\eta <\infty 
}
again by Lemma~\ref{lem:unifbddconds}.

It remains to show that $\norm{R(i\gw,A)}=\Omi(\abs{\gw}^{\ga_2})$.
The estimates made in the proof of Theorem~\ref{thm:polpolfull} show that
\eq{
\norm{R(i\gw,A)}^2
&\leq 3 \norm{R(i\gw,A_1)}^2 + 4\norm{R(i\gw,A_2)}^2  \\
& \quad + 4 M_{\text{tot}}^2  \sum_{k=1,2} \norm{R(i\gw,A_k)B_k}^2 \norm{C_lR(i\gw,A_l)}^2 .
}
This implies $\norm{R(i\gw,A)}= \Omi(\abs{\gw}^{\ga_2})$, since  in the case where $k=1$ or $l=1$, one of the resolvents is uniformly bounded, and we clearly have $\norm{R(i\gw,A_k)B_k} \norm{C_lR(i\gw,A_l)}  =\Omi(\abs{\gw}^{\ga_2})$. In the case $k=l=2$ the same conclusion follows from Lemma~\ref{lem:BCintprops}.  
\end{proof}

\section{Examples on Optimality of the Results}
\label{sec:optex}

In this section we present two examples to illustrate the optimality of the conditions in the results presented in Section~\ref{sec:compstab}.
In the first example we show that the condition $\gb/\ga_1+\gg/\ga_2\geq 1$ is in general an optimal condition for the polynomial stability of a semigroup generated by a triangular block operator matrix.

\begin{example}
  Let $A_1: \Dom(A_1)\subset X_1\rightarrow X_1$ generate a semigroup $T_1(t)$ such that $T_1(t)$ is polynomially stable with $\ga>0$, but 
\eqn{
\label{eq:polpolexass}
\sup_{t>0} t\,\norm{T_1(t)(-A_1)^{-\tilde{\ga}}} = \infty \qquad \mbox{for every} \quad 0\leq \tilde{\ga}<\ga. 
} 
  Choose $X_2=X_1$, $A_2=A_1$, $Y = X_1$, $B = (-A_1)^{-\gb}\in \Lin(X_1)$ and $C= (-A_1)^{-\gg}\in \Lin(X_1)$. Consider the triangular block operator matrix
  \eq{
  A=\pmat{A_1&BC\\0&A_1} = \pmat{A_1& (-A_1)^{-(\gb+\gg)}\\0&A_1}.
  }
  A direct computation shows that the semigroup $T(t)$ generated by $A$ is of the form
  \eq{
  A = \pmat{T_1(t)&tT_1(t)(-A_1)^{-(\gb+\gg)}\\ 0&T_1(t)}.
  }
Since $T_1(t)$ is uniformly bounded, the semigroup $T(t)$ is uniformly bounded if and only if
\eqn{
\label{eq:optexunifbdd}
\sup_{t>0} \; t \,\norm{T_1(t)(-A_1)^{-(\gb+\gg)}}<\infty.
}
Our assumption~\eqref{eq:polpolexass} shows that if $\gb/\ga_1+\gg/\ga_2< 1$, or equivalently $\gb+\gg<\ga$, the semigroup $T(t)$ is not uniformly bounded and it is therefore unstable. This concludes that the condition $\gb/\ga_1+\gg/\ga_2\geq 1$ is in general an optimal condition for the exponents.

On the other hand, if $\gb/\ga_1+\gg/\ga_2\geq 1$, then $T(t)$ is uniformly bounded due the polynomial stability of $A_1$ and~\eqref{eq:optexunifbdd}.
Moreover, the resolvent operator satisfies
\eq{
R(i\gw,A)= \pmat{R(i\gw,A_1)& R(i\gw,A_1)^2(-A_1)^{-(\gb+\gg)}\\0&R(i\gw,A_1)},
}
where 
\eq{
\norm{R(i\gw,A_1)^2(-A_1)^{-(\gb+\gg)}}
\leq\norm{R(i\gw,A_1)} \norm{R(i\gw,A_1)(-A_1)^{-\ga}} \norm{(-A_1)^{\ga-(\gb+\gg)}}
= \Omi(\abs{\gw}^\ga)
}
by Lemma~\ref{lem:BorTomresolvent}. This immediately implies $\norm{R(i\gw,A)}=\Omi(\abs{\gw}^\ga)$, and thus concludes that $T(t)$ is polynomially stable with $\ga$.  
In conclusion, in this example $\gb/\ga_1+\gg/\ga_2\geq 1$ is sufficient
for polynomial stability of $T(t)$ despite the fact that $Y$ is infinite-dimensional.
\end{example}

The second example shows that $\gb_2+\gg_2\geq \ga_2$ in Theorem~\ref{thm:polexpfull} is in general an optimal condition for the exponents.

\begin{example}
  \label{ex:exppol}
  Let $X_1=X_2=\lp[2](\C)$, and consider  
  \eq{
  A_1= \sum_{k=1}^\infty \left( -\gs + ik \right)\iprod{\cdot}{\phi_k}\phi_k,
  \qquad \mbox{and} \qquad
  A_2= \sum_{k=1}^\infty \left( -\frac{1}{k^{\ga_2}} + ik \right)\iprod{\cdot}{\phi_k}\phi_k
  }
  where $\ga_2>0$, $\gs>0$ and $\set{\phi_k}_{k=1}^\infty$ is the Euclidean basis of $X_1$. 
  The operators $A_1$ and $A_2$ generate semigroups $T_1(t)$ and $T_2(t)$, respectively, such that $T_1(t)$ is exponentially stable and $T_2(t)$ is polynomially stable with $\ga_2$. Choose $Y_1=Y_2=\C$ and for some fixed $n\in\N$ let
$B_1 = \gs  \phi_n$, $ B_2 = n^{-\ga_2/2} \phi_n$, $C_1 = \iprod{\cdot}{\phi_n}$, and $C_2 = n^{-\ga_2/2}\iprod{\cdot}{\phi_n}$.
  The block operator matrix $A$ on $X=X_1\times X_2$ is then given by
  \eq{
  A = \pmat{A_1 & \frac{\gs}{n^{\ga_2/2}}\iprod{\cdot}{\phi_n}\phi_n\\\frac{1}{n^{\ga_2/2}} \iprod{\cdot}{\phi_n}\phi_n &A_2} .
  }
  We have $\ran(B_2) = \ran(C_2^\ast) = \Span\set{\phi_n}\subset \Dom( (-A_2)^\infty)$, and therefore the range conditions on $B_2$ and $C_2$ are satisfied for any choices of the exponents $\gb_2,\gg_2\geq 0$. We will show that if $\gb_2+\gg_2<\ga_2$, then
  for any $\gd>0$ we can choose $n\in\N$ in such a way that the semigroup $T(t)$ is unstable even though
  \eq{
  \norm{B_1} \cdot
  \norm{C_1} \cdot
  \norm{(-A_2)^{\gb_2}B_2} \cdot
  \norm{(-A_2^\ast)^{\gg_2}C_2^\ast} <\gd.
  }
  To this end, let $\gd>0$ be arbitrary.
  A direct computation yields $\norm{B_1}=\gs$, $\norm{C_1}=1$, $\norm{(-A_2)^{\gb_2}B_2}=n^{\gb_2-\ga_2/2}$ and $\norm{(-A_2^\ast)^{\gg_2}C_2^\ast}=n^{\gg_2-\ga_2/2}$. Since $\gb_2+\gg_2<\ga_2$ by assumption, the product
  \eq{
  \norm{B_1} \cdot
  \norm{C_1} \cdot
  \norm{(-A_2)^{\gb_2}B_2} \cdot
  \norm{(-A_2^\ast)^{\gg_2}C_2^\ast} 
  = \gs n^{\gb_2+\gg_2-\ga_2}
  }
  can be made smaller than $\gd>0$ by choosing a sufficiently large $n\in \N$. It should be noted that also both of the operator norms $\norm{B_1C_2}=\gs n^{-\ga_2/2}$ and $\norm{B_2C_1}=n^{-\ga_2/2}$ can be made arbitrarily small by choosing a large enough $n\in\N$.

  To show that $T(t)$ is unstable, consider the operator 
  \eq{
  \gl-A = \pmat{\gl-A_1 & -\frac{\gs}{n^{\ga_2/2}} \iprod{\cdot}{\phi_n}\phi_n\\ -\frac{1}{n^{\ga_2/2}} \iprod{\cdot}{\phi_n}\phi_n &\gl-A_2} 
  }
  for  $\gl\in \overline{\C^+}$.
  The Schur complement $S_1(\gl)$ of $\gl-A_1$ in $\gl-A$ is 
  \eq{
  S_1(\gl) 
  = \gl - A_2 - \frac{\gs}{n^{\ga_2}} \iprod{R(\gl,A_1)\phi_n}{\phi_n}\iprod{\cdot}{\phi_n}\phi_n.
  }
  Since $\gs(A_1)\subset \C^-$, we have that $i\gw\in i\R$ is an eigenvalue of $A$ if and only if $0$ is an eigenvalue of $S_1(i\gw)$. But a direct computation shows that
  \eq{
  S_1(in)\phi_n
  = in \phi_n- \bigl( -\gs + in \bigr)\phi_n - \frac{\gs}{n^{\ga_2}} \cdot \frac{1}{in+1/n^{\ga_2} -in}\phi_n
  =  \gs\phi_n   - \frac{\gs}{n^{\ga_2}} n^{\ga_2} \phi_n
  =0.
  }
  This concludes that $\gl=in\in i\R$ is an eigenvalue of $A$, and thus the semigroup generated by $A$ is unstable for all choices of $n\in\N$.  
\end{example}

\section{Connected Wave Equations}
\label{sec:waveex}

In this section we use the results in Section~\ref{sec:compstab} to study the stability properties of a system consisting of two connected wave equations,
  \begin{subequations}
    \label{eq:waveex}
    \eqn{
    \label{eq:waveex1}
    v_{tt}(z,t) - \Delta v(z,t) + a(z) v_t(z,t)&=B_0C_0 w(\cdot,t)\\
    \label{eq:waveex2}
    w_{tt}(r,t)-w_{rr}(r,t)+(1-r)u(t)&=0 
    }
  \end{subequations}
  on $z\in \Omega= (0,\pi)\times (0,\pi)\subset \R^2$ and $r\in (0,1)$, respectively, and
with boundary and initial conditions
\begin{subequations}
  \label{eq:wavebc}
  \eq{
  v(z,t)&=0 \qquad z\in \partial \Omega\\
  v(z,0)&=v_0(z), \quad v_t(z,0)=v_1(z) \\
  w(0,t)&=w(1,t)=0\\
  w(r,0)&=w_0(r), \quad w_t(r,0)=w_1(r).
  }
\end{subequations}
The equation~\eqref{eq:waveex1} is a two-dimensional wave equation with local viscous damping term $a(z)v_t(z,t)$~\cite[Sec 3, Ex. 3]{LiuRao05}. The function $a(z)$ is chosen as
\eq{
a(z) = 
\left\{
\begin{array}{ll}
  1&0\leq z_1\leq 1\\
  0&\mbox{otherwise}
\end{array}
\right.
}
for $z=(z_1,z_2)\in \Omega$ (see Figure~\ref{fig:wavedom}). The function $u(t)$ in~\eqref{eq:waveex2} is chosen in such a way that the one-dimensional subsystem is polynomially stable. This is done in Section~\ref{sec:wave1D}.
Our main aim in this example is to derive conditions for the operators $B_0$ and $C_0$ in the coupling between the equations so that the connected system~\eqref{eq:waveex} is polynomially stable.
To accomplish this, we will write~\eqref{eq:waveex} as a triangular system
\eqn{
\label{eq:waveextrisys}
\ddb{t} \pmat{x_1\\x_2} = \pmat{A_1&BC\\0&A_2} \pmat{x_1\\x_2},
}
on a suitable space $X=X_1\times X_2$, and subsequently use Theorem~\ref{thm:polpoltri} to study the stability of the semigroup generated by its system operator.

\begin{figure}[ht]
  \begin{center}
    \includegraphics[scale=.9]{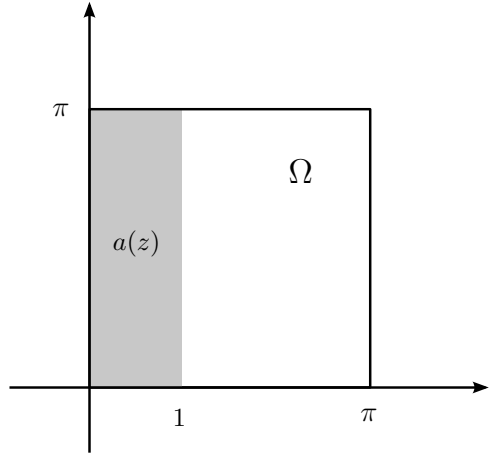}
  \end{center}
  \caption{The domain $\Omega = (0,\pi)\times (0,\pi)\subset \R^2$}
  \label{fig:wavedom}
\end{figure}

\subsection{The Two-Dimensional System}
\label{sec:wave2D}

The equation~\eqref{eq:waveex1} with the boundary conditions in~\eqref{eq:wavebc} can be written as a first order linear system on a Hilbert space $X=H_0^1(\Omega)\times \Lp[2](\Omega)$ with inner product $\iprod{x}{y}_X = \iprod{\nabla x_1}{\nabla y_1}_{\Lp[2](\Omega)^2} + \iprod{ x_2}{ y_2}_{\Lp[2](\Omega)}$ by choosing (see~\cite[Sec. 3, Ex. 3]{LiuRao05})
\eq{
&x=\pmat{v\\v_t}, \quad A=\pmat{0&I\\\Delta&-a(z)}, \quad \Dom(A)=\Setm{(x_1,x_2)}{x_2\in H_0^1(\Omega), ~ \Delta x_1 \in \Lp[2](\Omega)}.
}
With these choices~\eqref{eq:waveex1} 
without the term $B_0C_0w(\cdot,t)$ on the right-hand side
becomes
\eq{
\dot{x}=Ax, \qquad x(0)=x_0,
}
where $x_0=(v_0,v_1)^T$.
Since $a(z)$ is strictly positive on a vertical strip of $\Omega$, we have from~\cite[Sec. 3, Ex. 3]{LiuRao05} that $A$ generates a strongly stable semigroup and $\norm{R(i\gw,A)}=\Omi(\abs{\gw}^2)$, and thus by Lemma~\ref{lem:BorTomresolvent} the semigroup generated by $A$ is polynomially stable with $\ga=2$.

In the composite system~\eqref{eq:waveextrisys} we choose the first subsystem as $X_1 = H_0^1(\Omega)\times \Lp[2](\Omega)$ and $A_1 = A$. The semigroup $T_1(t)$ generated by $A_1$ is then polynomially stable with $\ga_1=2$.

\subsection{The One-Dimensional System}
\label{sec:wave1D}

Now we turn our attention to the one-dimensional equation~\eqref{eq:waveex2} with the boundary and initial conditions in~\eqref{eq:wavebc}.
We define $g_0(r)=1-r$ and $A_0:\Dom(A_0)\subset \Lp[2](0,1)\rightarrow \Lp[2](0,1)$ as $A_0=-\frac{d^2}{dr^2}$ with the domain 
\eq{
\Dom(A_0)=\left\{x\in\Lp[2](0,1)~\right|&~x,x' ~ \text{abs. cont.,}  \,
\left. x''\in\Lp[2](0,1),~ x(0)=x(1)=0\right\}.
}
The operator $A_0$ has a positive self-adjoint square root 
\eq{
A_0^{1/2} x = \sum_{k=1}^\infty k\pi \iprod{x(\cdot)}{\sqrt{2}\sin(k\pi \cdot)}_{\Lp[2]} \sqrt{2}\sin(k\pi \cdot)
}
and the space $X=\Dom(A_0^{1/2})\times \Lp[2](0,1)$ is a Hilbert space with the inner product $\iprod{x}{y}_X= \iprod{A_0^{1/2} x_1}{A_0^{1/2} y_1}_{\Lp[2]}+\iprod{x_2}{y_2}_{\Lp[2]}$ for $x=(x_1,x_2)^T$ and $y=(y_1,y_2)^T$. Choosing
\eq{
&x=\pmat{w\\w_r}, \quad A=\pmat{0&I\\-A_0&0}, \quad \Dom(A)=\Dom(A_0)\times\Dom(A_0^{1/2}), \\[2ex]
& G u=g u=\pmat{0\\g_0}u, \quad
x_0=\pmat{w_0\\w_1},
}
the wave equation \eqref{eq:waveex2} can be written as
\eqn{
\label{eq:wave1Dlinsys}
\dot{x}=Ax+G u, \qquad x(0)=x_0.
}
We will now show that we can choose $K=\iprod{\cdot}{h}\in \Lin(X,\C)$ in such a way that with feedback input $u(t) = Kx(t)$ the system~\eqref{eq:wave1Dlinsys} is polynomially stable with $\ga=5/3$.
The eigenvalues of the operator $A$ are $\gl_k=ik\pi$ for $k\in\Z\setminus\set{0}$, and the corresponding eigenvectors 
\eq{
\varphi_k(z)=\half[\gl_k]\pmat{\sin(k\pi z)\\\gl_k\sin(k\pi z)}
}
form an orthonormal basis of $X$ and 
\eq{
\iprod{g}{\varphi_k}_X=\iprod{g_0}{\sin(k\pi\cdot)}_{\Lp[2]} =\frac{1}{k\pi}.
}
For $k\neq 0$ denote $\mu_k=-\frac{1}{\abs{k}^{5/3}}+ik\pi$. Then for any $\gl\in\C$ with $\dist(\gl,\gs(A))\geq \frac{\pi}{3} = \frac{1}{3}\inf_{k\neq l}\abs{\gl_k-\gl_l}$ and for any $l\neq 0$ we have
\begin{subequations}
\label{eq:poleplaceconds}
\eqn{
&\sum_{k\neq 0}\frac{\abs{\iprod{g}{\varphi_k}}^2}{\abs{\gl-\gl_k}^2}\leq \frac{1}{\pi^2\dist(\gl,\gs(A))^2}\sum_{k\neq 0}\frac{1}{k^2} <\infty \\
&\sum_{\substack{k\neq 0\\k\neq l}}\frac{\abs{\iprod{g}{\varphi_k}}^2}{\abs{\gl_k-\gl_l}^2}\leq \frac{1}{\pi^2} \sum_{\substack{k\neq 0\\k\neq l}} \frac{1}{k^2 \pi^2} <\infty\\
&\sum_{k\neq 0}\frac{\abs{\mu_k-\gl_k}^2}{\abs{\iprod{g}{\varphi_k}}^2}=\sum_{k\neq 0} \frac{\frac{1}{\abs{k}^{10/3}}}{\frac{1}{\pi^2k^2}} =\pi^2\sum_{k\neq 0} \frac{1}{\abs{k}^{4/3}}<\infty.
}
\end{subequations}
We now have from~\cite[Thm. 1]{xusallet} that there exists $h\in X$ such that $A+GK$ with $K=\iprod{\cdot}{h}_X$ is a Riesz-spectral operator with eigenvalues $\set{\mu_k}_{k\neq 0}$ and $A+GK$ has at most finite number of nonsimple eigenvalues. This immediately implies that for some constant $M\geq 1$ and for $\gw\in \R$ we have
\eq{
\norm{R(i\gw,A+GK)} \leq \frac{M}{\inf_k \dist(i\gw,\mu_k)} = \Omi(\abs{\gw}^{5/3}),
}
and thus the semigroup generated by $A+GK$ is polynomially stable with $\ga=5/3$.

In the composite system~\eqref{eq:waveextrisys} we choose $X_2 = \Dom(A_0^{1/2})\times \Lp[2](0,1)$ and $A_2 = A+GK$. We then have that the semigroup $T_2(t)$ generated by $A_2$ is polynomially stable with $\ga_2=5/3$.

\subsection{The Composite System}

If the space $X_1$ and $X_2$ and the operators $A_1$ and $A_2$ are chosen as in Sections~\ref{sec:wave2D} and~\ref{sec:wave1D}, then the coupled wave equations~\eqref{eq:waveex} can be written as a triangular system
\eq{
\ddb{t} \pmat{x_1\\x_2} = \pmat{A_1&BC\\0&A_2} \pmat{x_1\\x_2},
}
where the operators $B\in \Lin(Y,X_1)$ and $C\in \Lin(X_2,Y)$ are such that $By = (0,B_0y)^T \in X_1$ for $y\in Y$ and $C(x_2^1,x_2^2)= C_0 x_2^1$ for $x_2=(x_2^1,x_2^2)^T\in X_2$.

We can now use Theorem~\ref{thm:polpoltri} to pose conditions on $B$ and $C$ so that the triangular block operator matrix generates a polynomially stable semigroup.  Indeed, if these operators are such that $\ran(B)\subset \Dom(A_1)$ and $C(-A_2): \Dom(A_2)\rightarrow X_2$ extends to a bounded operator on $X_2$
(i.e., if $\gb=\gg=1$), then $\gb/\ga_1+\gg/\ga_2 = 1/2 + 3/5 = 11/10>1$, and Theorem~\ref{thm:polpoltri} concludes that the system~\eqref{eq:waveex} is polynomially stable with $\ga = \max \set{\ga_1,\ga_2}=2$. In particular, the space $Y$ does not have to be finite-dimensional. As an example, 
we can consider an interconnection of the form
\eqn{
\label{eq:wavecoupterm}
(B_0C_0 w(\cdot,t))(z) &= \sum_{k\neq 0} \frac{1}{k^2} \iprod{w(\cdot,t)}{\sin(k\pi \cdot)}_{\Lp[2]}  \sin(k z_1)\sin(k z_2) 
}
for $z=(z_1,z_2)\in \Omega$.
Here we can choose 
$ Y = \lp[2](\C) $
with the Euclidean basis vectors $\set{e_k}_{k\in\Z\setminus \set{0}}$, and define $B_0\in \Lin(Y,\Lp[2](\Omega))$ and $C_0\in \Lin( \Dom(A_0^{1/2}),Y)$ such that
\eq{
(B_0y)(z) &= \sum_{k\neq 0} \frac{1}{k^2} \iprod{y}{e_k} \sin(k z_1)\sin(k z_2)\\
C_0x_2^1 &= \sum_{k\neq 0} \iprod{x_2^1}{\sin(k\pi \cdot)}_{\Lp[2]} e_k
}
for $z=(z_1,z_2)\in \Omega$. We then have
\eq{
\norm{C_0x_2^1}^2 
&= \sum_{k\neq 0} \abs{\iprod{x_2^1}{\sin(k\pi \cdot)}_{\Lp[2]}}^2 
= 2\sum_{k=1}^\infty \abs{\iprod{x_2^1}{\sin(k\pi \cdot)}_{\Lp[2]}}^2\\
&\leq \sum_{k=1}^\infty k^2 \pi^2\abs{\iprod{x_2^1}{\sqrt{2}\sin(k\pi \cdot)}_{\Lp[2]}}^2
= \norm{x_2^1}_{\Dom(A_0^{1/2})}^2
}
For every $(x^1,x^2)^T\in \Dom(A_2) = \Dom(A_0)\times \Dom(A_0^{1/2})$ we also have that
\eq{
C(-A_2) \pmat{x^1\\ x^2}
=- C \pmat{0&I\\-A_0&0} \pmat{x^1\\ x^2}
- C GK \pmat{x^1\\ x^2}
= -C_0 x^2 
- C GK \pmat{x^1\\ x^2}.
}
Since $CGK\in \Lin(X_2,Y)$ and since
\eq{
\norm{C_0x^2}^2
&= \sum_{k\neq 0} \abs{\iprod{x^2}{\sin(k\pi \cdot)}_{\Lp[2]}}^2
= \sum_{k=1}^\infty \abs{\iprod{x^2}{\sqrt{2}\sin(k\pi \cdot)}_{\Lp[2]}}^2
= \norm{x^2}_{\Lp[2](0,1)}^2 \\
&\leq \norm{x^1}_{\Dom(A_0^{1/2})}^2
+ \norm{x^2}_{\Lp[2](0,1)}^2 
= \Norm{\pmat{x^1\\x^2}}^2,
}
we have that $C(-A_2)$ extends to a bounded operator on $X_2$, and thus $\gg=1$.

In order to verify that $B$ satisfies $\ran(B)\subset \Dom( (-A_1)^\gb)$ for $\gb=1$, we need to show that
$\ran(B_0)\subset H_0^1(\Omega)$.
Let $y\in Y=\lp[2](\C)$ be arbitrary. We have $(\iprod{y}{e_k}/k)_{k\neq 0}\in \lp[1](\C)$, and therefore
\eq{
\pdb{z_1}(B_0y)(z) 
&= \sum_{k\neq 0} \frac{1}{k^2} \iprod{y}{e_k} k\sin(k z_1)\cos(k z_2)
= \sum_{k\neq 0} \frac{1}{k} \iprod{y}{e_k} \sin(k z_1)\cos(k z_2)\\ 
\pdb{z_2}(B_0y)(z) 
&= \sum_{k\neq 0} \frac{1}{k} \iprod{y}{e_k} \sin(k z_1)\cos(k z_2).
}
Moreover, the property $(\iprod{y}{e_k}/k)_{k\neq 0}\in \lp[1](\C)$ implies that $\pdb{z_1}(B_0y)(\cdot)$ and $\pdb{z_2}(B_0y)(\cdot)$ are bounded uniformly continuous functions on $\overline{\Omega} = [0,\pi]\times [0,\pi]$. Since we also clearly have $(B_0y)(z)=0$ for every $z\in \partial\Omega$, this concludes $B_0y\in H_0^1(\Omega)$. The element $y\in Y$ was arbitrary, and we have thus shown that $\ran(B_0)\subset H_0^1(\Omega)$.

Since the conditions of Theorem~\ref{thm:polpoltri} are satisfied, we have that the system~\eqref{eq:waveex} of wave equations with the coupling~\eqref{eq:wavecoupterm} is polynomially stable with $\ga=\max \set{\ga_1,\ga_2}=2$.

\section{Conclusions}
\label{sec:conclusions}

In this paper we have studied the stability properties of a semigroup $T(t)$ generated by a $2\times 2$ block operator matrix $A$ under the assumption that the semigroups $T_1(t)$ and $T_2(t)$ generated by the diagonal operator blocks are polynomially stable. As our main results we presented conditions that guarantee polynomial stability of $T(t)$. We separately studied the situation where $A$ is a triangular block operator matrix. For such semigroups the conditions for polynomial stability are considerably weaker. In particular, for a full operator matrix the stability requires a condition that the graph norms
\eq{
  \norm{(-A_1)^{\gb_1}B_1} , \quad
  \norm{(-A_1^\ast)^{\gg_1}C_1^\ast} , \quad
  \norm{(-A_2)^{\gb_2}B_2} , \quad \mbox{and}\quad
  \norm{(-A_2^\ast)^{\gg_2}C_2^\ast} 
}
are sufficiently small. For a triangular $A$ no such condition is necessary. We also saw that in the case where one of the semigroups $T_1(t)$ and $T_2(t)$ is exponentially stable, the semigroup generated by a triangular $A$ is polynomially stable without any additional assumptions. In the case of a full operator matrix $A$, the stability of $T(t)$ still requires additional conditions on the exponents and the graph norms, as was illustrated in Example~\ref{ex:exppol}.

\end{document}